\title{Indecomposable Modules in the Grassmannian Cluster Category ${\rm CM}(B_{5,10})$} 
\author{Dusko Bogdanic and Ivan-Vanja Boroja}
\date{} 
\definecolor{light-grey}{gray}{0.6}  
\begin{document}

\newtheorem{lm}{Lemma}[section]
\newtheorem{prop}[lm]{Proposition}
\newtheorem{satz}[lm]{Satz}

\newtheorem{corollary}[lm]{Corollary}
\newtheorem{cor}[lm]{Korollar}
\newtheorem{claim}[lm]{Claim}
\newtheorem{theorem}[lm]{Theorem}
\newtheorem*{thm}{Theorem}

\theoremstyle{definition}
\newtheorem{defn}[lm]{Definition}
\newtheorem{qu}{Question}
\newtheorem{ex}[lm]{Example}
\newtheorem{exas}[lm]{Examples}
\newtheorem{exc}[lm]{Exercise}
\newtheorem*{facts}{Facts}
\newtheorem{rem}[lm]{Remark}

\theoremstyle{remark}

\newcommand{\perm}{\operatorname{Perm}\nolimits}
\newcommand{\NN}{\operatorname{\mathbb{N}}\nolimits}
\newcommand{\ZZ}{\operatorname{\mathbb{Z}}\nolimits}
\newcommand{\QQ}{\operatorname{\mathbb{Q}}\nolimits}
\newcommand{\RR}{\operatorname{\mathbb{R}}\nolimits}
\newcommand{\CC}{\operatorname{\mathbb{C}}\nolimits}
\newcommand{\PP}{\operatorname{\mathbb{P}}\nolimits}
\newcommand{\cA}{\operatorname{\mathcal{A}}\nolimits}
\newcommand{\LL}{\operatorname{\Lambda}\nolimits}

\newcommand{\MM}{\operatorname{\mathbb{M}}\nolimits}
\newcommand{\Fk}{\mathcal{F}_{k,n}}
\newcommand{\Mk}{M_{k,n}}

\newcommand{\ad}{\operatorname{ad}\nolimits}
\newcommand{\im}{\operatorname{im}\nolimits}
\newcommand{\Char}{\operatorname{char}\nolimits}
\newcommand{\Aut}{\operatorname{Aut}\nolimits}

\newcommand{\id}{\operatorname{id}\nolimits}
\newcommand{\Id}{\operatorname{Id}\nolimits}
\newcommand{\ord}{\operatorname{ord}\nolimits}
\newcommand{\ggT}{\operatorname{ggT}\nolimits}
\newcommand{\lcm}{\operatorname{lcm}\nolimits}

\newcommand{\Gr}{\operatorname{Gr}\nolimits}

\maketitle

\begin{abstract} 
In this paper we study indecomposable rank 2 modules in the Grassmannian cluster category ${\rm CM}(B_{5,10})$. This is the smallest wild case containing modules whose profile layers are $5$-interlacing. We construct all rank 2 indecomposable modules with filtration $\{i,i+2,i+4,i+6,i+8\}\mid \{i+1,i+3,i+5,i+7,i+9\}$, classify them up  to isomorphism,  and  parameterize all infinite  families of non-isomorphic rank 2 modules.
\end{abstract}

\noindent
%
%
\section{Introduction and preliminaries}

In  their seminal work \cite{FZ}, Fomin and Zelevinsky  used the homogeneous coordinate ring $\CC[\Gr(2,n)]$ of the Grassmannian of $2$-dimensional subspaces of $\CC^n$ as one of the first examples of the theory of cluster algebras. Scott proved in~\cite{Scott06} that this cluster structure can be generalized to the coordinate ring $\CC[\Gr(k,n)]$. These results initiated a lot of research activities in cluster theory, e.g.  \cite{SW, gls, HL10, gssv, MuS, BKM16, JKS16, mr,fraser}.  Geiss, Leclerc, and Schroer \cite{rigid, GLS08} gave an aditive categorification of the cluster algebra structure on the homogeneous coordinate ring  of the Grassmannian variety of $k$-dimensional subspaces in $\mathbb C^n$ in terms of a subcategory of the category of finite dimensional modules over the preprojective algebra of type $A_{n-1}$. Jensen, King, and Su \cite{JKS16} introduced a new additive categorification of this cluster structure using the maximal Cohen-Macaulay modules over the completion of an algebra $B_{k,n}$ which is a quotient of the preprojective algebra of type $A_{n-1}$.  In the category ${\rm CM}(B_{k,n}) $ of Cohen-Macaulay modules over $B_{k,n}$, among the indecomposable modules are the rank $1$ modules which are known to be in bijection with $k$-subsets of $\{1,2,\dots,n\}$, and their explicit construction has been given in \cite{JKS16} (a $k$-subset $I$ corresponds to the rank 1 module $L_I$). Rank 1 modules are the building blocks of the category as any module in ${\rm CM}(B_{k,n}) $ can be filtered by rank $1$ modules (the filtration is noted in the profile of a module, \cite[Corollary 6.7]{JKS16}). The number of rank 1 modules appearing in the filtration of a given module is called the rank of that module. 

The aim of this paper is to explicitly construct rank 2 indecomposable Cohen-Macaulay $B_{k,n}$-modules in the case when $k=5$ and $n=10$. All indecomposable modules of rank 2 whose rank 1 filtration layers $L_I$ and $L_J$ satisfy the condition $|I\cap J|\geq k-4$ have been constructed in \cite{BBL}. This covers all tame cases and the wild case $(4,9)$. The case $(5,10)$ is the smallest wild case that contains rank 2 indecomposable modules whose layers are $5$-interlacing. In this case, the only profiles with $5$-interlacing layers are of the form $\{i,i+2,i+4,i+6,i+8\}\mid \{i+1,i+3,i+5,i+7,i+9\}$, where $i=1,2$.  We construct all indecomposable modules with filtration $\{i,i+2,i+4,i+6,i+8\}\mid \{i+1,i+3,i+5,i+7,i+9\}$, classify them up  to isomorphism,  and  parameterize all infinite  families of non-isomorphic rank 2 modules. It is important to remark that even though we only treat the case $(5,10)$ in this paper, all arguments and results are also valid for the general case $(k,n)$ for all rank 2 modules with  tightly $5$-interlacing layers. 

We follow the exposition from \cite{JKS16, BB, BBGE} in order to introduce notation and background results. Here, the central combinatorial notion is that of $r$-interlacing.

\begin{defn}[$r$-interlacing] \label{interlacing}
Let $I$ and $J$ be two $k$-subsets of $\{1,\dots,n\}$. The sets $I$ and $J$ are said to be {\em $r$-interlacing} if there exist subsets  $\{i_1,i_3,\dots,i_{2r-1}\}\subset I\setminus J$ and $\{i_2,i_4,\dots, i_{2r}\}\subset J\setminus I$ 
such that $i_1<i_2<i_3<\dots <i_{2r}<i_1$ (cyclically) 
and if there exist no larger subsets of $I$ and of $J$ with this property.  We say that $I$ and $J$ are 
{\em tightly $r$-interlacing} if they are $r$-interlacing and $|I\cap J|=k-r.$
\end{defn}

 Let $\Gamma_n$ be the quiver of the boundary algebra, with vertices $1,2,\dots, n$ 
on a cycle and arrows $x_i: i-1\to i$, $y_i:i\to i-1$. 
We write ${\rm CM}(B_{k,n})$ for the category of maximal Cohen-Macaulay modules for  
the completed path algebra $B_{k,n}$ of $\Gamma_n$, with relations 
$xy-yx$ and $x^k-y^{n-k}$ (at every vertex). The centre of $B_{k,n}$ is 
$Z:=\CC[|t|]$, where $t=\sum_ix_iy_i$.  For example,  in the following figure we have the quiver $\Gamma_n$ for $n=5.$ 
\begin{figure}[H]
\begin{center}
\begin{tikzpicture}[scale=1]
\newcommand{\radius}{1.5cm}
\foreach \j in {1,...,5}{
  \path (90-72*\j:\radius) node[black] (w\j) {$\bullet$};
  \path (162-72*\j:\radius) node[black] (v\j) {};
  \path[->,>=latex] (v\j) edge[black,bend left=30,thick] node[black,auto] {$x_{\j}$} (w\j);
  \path[->,>=latex] (w\j) edge[black,bend left=30,thick] node[black,auto] {$y_{\j}$}(v\j);
}
\draw (90:\radius) node[above=3pt] {$5$};
\draw (162:\radius) node[above left] {$4$};
\draw (234:\radius) node[below left] {$3$};
\draw (306:\radius) node[below right] {$2$};
\draw (18:\radius) node[above right] {$1$};
\end{tikzpicture}
\end{center}
\caption{The quiver $\Gamma_5$} \label{quiver5}
\end{figure}
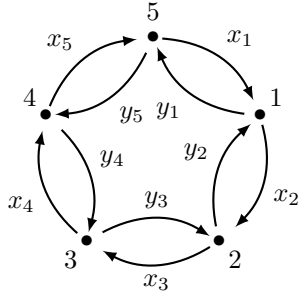

The algebra $B_{k,n}$  coincides with the quotient of the completed path
algebra of the graph $C$ (a circular graph with vertices
$C_0=\mathbb Z_n$ set clockwise around a circle, and with the set of edges, $C_1$, also
labeled by $\mathbb Z_n$, with edge $i$ joining vertices $i-1$ and $i$), i.e., the doubled quiver as above,
by the closure of the ideal generated by the relations above (we view the completed path
algebra of the graph $C$ 
as a topological algebra via the $m$-adic topology, where $m$ is the two-sided ideal
generated by the arrows of the quiver, see \cite[Section 1]{DWZ08}). The algebra 
$B_{k,n}$ was introduced in \cite[Section 3]{JKS16}. 
Observe that $B_{k,n}$ is isomorphic to $B_{n-k,n}$, so we will always take $k\le \frac n 2$. Moreover, throughout this paper, we will be working with the algebra $B_{5,10}$. For background results on algebras given by quivers and relations and their representations we recommend \cite{ASS}.

The (maximal) Cohen-Macaulay $B_{k,n}$-modules are precisely those which are
free as $Z$-modules. Such a module $M$ is given by a representation
$\{M_i\,:\,i\in C_0\}$ of
the quiver with each $M_i$ a free $Z$-module of the same rank
(which is the rank of $M$).

\begin{defn}[\cite{JKS16}, Definition 3.5]
For any $B_{k,n}$-module $M$ and $K$ the field of fractions of $Z$, the {\bf rank} 
of $M$, denoted by ${\rm rk}(M)$,  is defined 
to be ${\rm rk}(M) = {\rm len}(M \otimes_Z K)$. 
\end{defn}

Note that $B_{k,n}\otimes_Z K\cong M_n ( K)$, 
which is a simple algebra. It is easy to check that the rank is additive on short exact sequences,
that ${\rm rk} (M) = 0$ for any finite-dimensional $B_{k,n}$-module 
(because these are torsion over $Z$) and 
that, for any Cohen-Macaulay $B_{k,n}$-module $M$ and every idempotent $e_j$, $1\leq j\leq n$, ${\rm rk}_Z(e_j M) = {\rm rk}(M)$, so that, in particular, ${\rm rk}_Z(M) = n  {\rm rk}(M)$.

\begin{defn}[\cite{JKS16}, Definition 5.1] \label{d:moduleMI}
For any $k$-subset   $I$  of $C_1$, we define a rank $1$ $B_{k,n}$-module
\[
  L_I = (U_i,\ i\in C_0 \,;\, x_i,y_i,\, i\in C_1)
\]
as follows.
For each vertex $i\in C_0$, set $U_i=\mathbb C[[t]]$ and,
for each edge $i\in C_1$, set
\begin{itemize}
\item[] $x_i\colon U_{i-1}\to U_{i}$ to be multiplication by $1$ if $i\in I$, and by $t$ if $i\not\in I$,
\item[] $y_i\colon U_{i}\to U_{i-1}$ to be multiplication by $t$ if $i\in I$, and by $1$ if $i\not\in I$.
\end{itemize}
\end{defn}

The module $L_I$ can be represented by a lattice diagram
$\mathcal{L}_I$ in which $U_0,U_1,U_2,\ldots, U_n$ are represented by columns of vertices (dots) from
left to right (with $U_0$ and $U_n$ to be identified), going down infinitely. 
The vertices in each column correspond to the natural monomial 
$\mathbb C$-basis of $\mathbb C[t]$.
The column corresponding to $U_{i+1}$ is displaced half a step vertically
downwards (respectively, upwards) in relation to $U_i$ if $i+1\in I$
(respectively, $i+1\not \in I$), and the actions of $x_i$ and $y_i$ are
shown as diagonal arrows. Note that the $k$-subset $I$ can then be read off as
the set of labels on the arrows pointing down to the right which are exposed
to the top of the diagram. For example, the lattice diagram $\mathcal{L}_{\{1,4,5\}}$
in the case $k=3$, $n=8$, is shown in the following picture   
\begin{center}
\begin{figure}[h]
\center
\begin{tikzpicture}[scale=0.8,baseline=(bb.base),
quivarrow/.style={black, -latex, thin}]
\newcommand{\seventh}{51.4} 
\newcommand{\circradius}{1.5cm}
\newcommand{\inradius}{1.2cm}
\newcommand{\outradius}{1.8cm}
\newcommand{\dotrad}{0.1cm} 
\newcommand{\bdrydotrad}{{0.8*\dotrad}} 
\path (0,0) node (bb) {}; 


\draw (0,0) circle(\bdrydotrad) [fill=black];
\draw (0,2) circle(\bdrydotrad) [fill=black];
\draw (1,1) circle(\bdrydotrad) [fill=black];
\draw (2,0) circle(\bdrydotrad) [fill=black];
\draw (2,2) circle(\bdrydotrad) [fill=black];
\draw (3,1) circle(\bdrydotrad) [fill=black];
\draw (3,3) circle(\bdrydotrad) [fill=black];
\draw (4,0) circle(\bdrydotrad) [fill=black];
\draw (4,2) circle(\bdrydotrad) [fill=black];
\draw (5,1) circle(\bdrydotrad) [fill=black];
\draw (6,0) circle(\bdrydotrad) [fill=black];
\draw (6,2) circle(\bdrydotrad) [fill=black];
\draw (7,1) circle(\bdrydotrad) [fill=black];
\draw (7,3) circle(\bdrydotrad) [fill=black];
\draw (8,2) circle(\bdrydotrad) [fill=black];
\draw (8,4) circle(\bdrydotrad) [fill=black];
\draw (8,0) circle(\bdrydotrad) [fill=black];


\draw [quivarrow,shorten <=5pt, shorten >=5pt, ultra thick] (0,2)-- node[above]{$1$} (1,1);
\draw [quivarrow,shorten <=5pt, shorten >=5pt] (1,1) -- node[above]{$1$} (0,0);
\draw [quivarrow,shorten <=5pt, shorten >=5pt, ultra thick] (2,2) -- node[above]{$2$} (1,1);
\draw [quivarrow,shorten <=5pt, shorten >=5pt] (1,1) -- node[above]{$2$} (2,0);
\draw [quivarrow,shorten <=5pt, shorten >=5pt, ultra thick] (3,3) -- node[above]{$3$} (2,2);
\draw [quivarrow,shorten <=5pt, shorten >=5pt] (2,2) -- node[above]{$3$} (3,1);
\draw [quivarrow,shorten <=5pt, shorten >=5pt] (3,1) -- node[above]{$3$} (2,0);
\draw [quivarrow,shorten <=5pt, shorten >=5pt, ultra thick] (3,3) -- node[above]{$4$} (4,2);
\draw [quivarrow,shorten <=5pt, shorten >=5pt] (4,2) -- node[above]{$4$} (3,1);
\draw [quivarrow,shorten <=5pt, shorten >=5pt] (3,1) -- node[above]{$4$} (4,0);
\draw [quivarrow,shorten <=5pt, shorten >=5pt, ultra thick] (4,2) -- node[above]{$5$} (5,1);
\draw [quivarrow,shorten <=5pt, shorten >=5pt] (5,1) -- node[above]{$5$} (4,0);
\draw [quivarrow,shorten <=5pt, shorten >=5pt, ultra thick] (6,2) -- node[above]{$6$} (5,1);
\draw [quivarrow,shorten <=5pt, shorten >=5pt] (5,1) -- node[above]{$6$} (6,0);
\draw [quivarrow,shorten <=5pt, shorten >=5pt] (6,2) -- node[above]{$7$} (7,1);
\draw [quivarrow,shorten <=5pt, shorten >=5pt] (7,1) -- node[above]{$7$} (6,0);
\draw [quivarrow,shorten <=5pt, shorten >=5pt, ultra thick] (7,3) -- node[above]{$7$} (6,2);
\draw [quivarrow,shorten <=5pt, shorten >=5pt] (7,3) -- node[above]{$8$} (8,2);
\draw [quivarrow,shorten <=5pt, shorten >=5pt] (8,2) -- node[above]{$8$} (7,1);
\draw [quivarrow,shorten <=5pt, shorten >=5pt, ultra thick] (8,4) -- node[above]{$8$} (7,3);
\draw [quivarrow,shorten <=5pt, shorten >=5pt] (7,1) -- node[above]{$8$} (8,0);

\draw [dotted] (0,-2) -- (0,2);
\draw [dotted] (8,-2) -- (8,4);

\draw [dashed] (4,-2) -- (4,-1);
\end{tikzpicture}
\caption{Lattice diagram of the module $L_{\{1,4,5\}}$} \label{Lattice}
\end{figure}
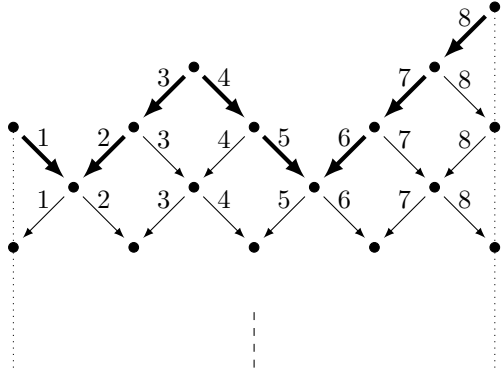
\end{center}

We see from Figure \ref{Lattice} that the module $L_I$ is determined by its upper boundary, denoted  by the thick lines, 
which we refer to as the {\em rim} of the module $L_I$ (this is why we call the $k$-subset $I$  the rim of $L_I$). 
Throughout this paper we will identify a rank 1 module $L_I$ with its rim. Moreover, most of the time we will omit the arrows in the rim of $L_I$ and represent it as an undirected graph.

\begin{prop}[\cite{JKS16}, Proposition 5.2]
Every rank $1$ Cohen-Macaulay $B_{k,n}$-module is isomorphic to $L_I$
for some unique $k$-subset $I$ of $C_1$.
\end{prop}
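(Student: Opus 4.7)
The plan is to show that, after an appropriate choice of $Z$-bases at each vertex, every rank $1$ Cohen-Macaulay $B_{k,n}$-module is rigidly determined by the set $I$ of vertices at which the $x$-arrow acts by a unit, and that in those bases the module coincides with $L_I$. First I would unpack the data: the general remarks above already present $M$ as a representation $(M_i;x_i,y_i)$ with each $M_i=e_iM$ free of rank $1$ over $Z=\CC[[t]]$, so fixing a generator $e_i\in M_i$ for each $i$ lets me record the arrows by scalars $\phi_i,\psi_i\in\CC[[t]]$ via $x_i(e_{i-1})=\phi_ie_i$ and $y_i(e_i)=\psi_ie_{i-1}$.

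Next I would extract $I$ from the relations. The local relations $x_iy_i=y_{i+1}x_{i+1}=t$ translate into $\phi_i\psi_i=t$ in $\CC[[t]]$, so, since $\CC[[t]]$ is a discrete valuation ring, each $\phi_i$ has $t$-valuation $0$ or $1$. Combining $x^k=y^{n-k}$ with $xy=yx=t$ gives $x^n=x^kx^{n-k}=y^{n-k}x^{n-k}=t^{n-k}$, and evaluating this on $e_i$ by going once around the cycle produces the exact identity
\[
\prod_{j=1}^{n}\phi_j=t^{n-k}.
\]
Writing $\phi_j=u_jt^{a_j}$ with $u_j\in\CC[[t]]^\times$ and $a_j\in\{0,1\}$, this forces $\#\{j:a_j=0\}=k$ and $\prod_{j=1}^{n}u_j=1$. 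I set $I:=\{j:a_j=0\}$, a $k$-subset of $C_1$.

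Finally I would normalize the bases to match $L_I$ and then argue uniqueness. Replacing $e_i$ by $c_ie_i$ with $c_i\in\CC[[t]]^\times$ changes $\phi_i$ into $c_{i-1}\phi_i/c_i$, and the recursion $c_0=1$, $c_i=c_{i-1}u_i$ turns $\phi_i$ into $1$ if $i\in I$ and into $t$ otherwise, while $\psi_i$ is then forced by $\phi_i\psi_i=t$ into the form of Definition~\ref{d:moduleMI}, producing an isomorphism $M\cong L_I$. The main obstacle is the cyclic nature of this recursion: it would fail to close up at $c_n=c_0$ without the exact identity $\prod_j u_j=1$, which is available precisely because $x^n=t^{n-k}$ holds on the nose in $B_{k,n}$ rather than only up to a unit. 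For uniqueness, $I$ is intrinsic to $M$: one has $i\in I$ iff the induced map $x_i\colon M_{i-1}/tM_{i-1}\to M_i/tM_i$ of one-dimensional $\CC$-vector spaces is nonzero, and this invariant is respected by any isomorphism, so $L_I\cong L_J$ forces $I=J$.
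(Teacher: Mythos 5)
Your proof is correct, and the paper itself offers no argument here --- the proposition is simply quoted from \cite{JKS16}, where the proof is essentially the normalization you give: choose $Z$-generators of each $e_iM$, read off scalars $\phi_i,\psi_i$ with $\phi_i\psi_i=t$, use $x^n=t^{n-k}$ to see that exactly $k$ of the $\phi_i$ are units, and rescale the generators around the cycle (the product of the units being $1$ is exactly what lets the rescaling close up). Your uniqueness argument via the intrinsic invariant $x_i\colon M_{i-1}/tM_{i-1}\to M_i/tM_i$ is also the standard one, so there is nothing to add.
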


Every $B_{k,n}$-module has a canonical endomorphism given by multiplication by $t\in Z$.
For ${L}_I$ this corresponds to shifting $\mathcal{L}_I$ one step downwards.
Since $Z$ is central, ${\rm Hom}_{B_{k,n}}(M,N)$ is
a $Z$-module for arbitrary $B_{k,n}$-modules $M$ and $N$. If $M,N$ are free $Z$-modules, then so is ${\rm Hom}_{B_{k,n}}(M,N)$. In particular, for any two rank 1 
Cohen-Macaulay $B_{k,n}$-modules $L_I$ and $L_J$, ${\rm Hom}_{B_{k,n}}(L_I,L_J)$ is a free 
module of rank 1
over $Z=\mathbb C[[t]]$, generated by the canonical map given by placing the 
lattice of $L_I$ inside the lattice of $L_J$ as far up as possible so that no part of the rim of $L_I$ is strictly above the rim of $L_J$ \cite[Section 6]{JKS16}.

Every indecomposable module $M$ of rank $n$ in ${\rm CM}(B_{k,n})$ has a filtration having factors 
$L_{I_1},L_{I_{2}},\dots, L_{I_n}$ of rank 1. 
This filtration is noted in its \emph{profile}, 
${\rm pr} (M) = I_1 \mid I_2\mid\ldots \mid I_n$, \cite[Corollary 6.7]{JKS16}.
In the case of  a rank $2$ module $M$ with filtration $L_I\mid L_J$ (i.e., with profile $I\mid J$), 
we picture the profile of this module by drawing the rim $J$ below the rim $I$, in such a way that $J$ is placed as far up as possible so that no part of the rim $J$ is strictly above the rim $I$. 
Note that there is at least one point where the rims $I$ and $J$ meet (see {Figure~\ref{fig:profil} for an example}). 

\begin{figure}[H]
\begin{center}
\includegraphics[width=8cm]{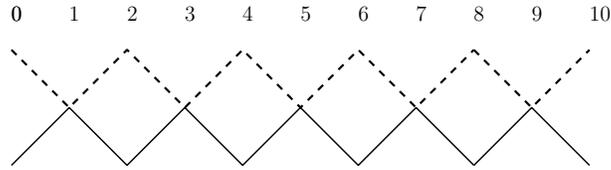}
\caption{The profile $\{1,3,5,7,9\}\mid \{2,4,6,8,10\}$ in ${\rm CM}(B_{5,10})$.}
\label{fig:profil}
\end{center}
\end{figure}

For background on the poset associated to an indecomposable module or to its profile, 
we refer to~\cite[Section 6]{JKS16} and to~\cite[Section 2]{BBGEL20}.

\section{Tight $5$-interlacing}\label{sec:tight-5}
In this section we construct all rank 2 indecomposable modules with the profile $I\mid J$ in the case when $I$ and $J$ are  tightly $5$-interlacing $5$-subsets, i.e., when $|I\setminus J|=|J\setminus I|=5$ 
and non-common elements of $I$ and $J$ interlace, that is, $|I\cap J|=0$. Rank 2 indecomposable modules with $3$-interlacing and $4$-interlacing layers have been constructed and parameterized in \cite{BBL}. 

In the case $(5,10)$, there are only two profiles with  $5$-interlacing layers, namely $I\mid J$ and $J\mid I$, where $I=\{1,3,5,7, 9\}$ and $J=\{2,4,6,8, 10\}$.  We will work with the profile  $I\mid J$, the arguments are the same for $J\mid I.$

In \cite{BBL},  we defined a rank 2 module $\MM(I,J)$ with filtration $L_I\mid L_J$ in a similar way as rank 1 modules 
are defined in ${\rm CM}(B_{k,n})$. We recall the construction here.   
Let $V_i:=\CC[|t|]\oplus\CC[|t|]$, $i=1,\dots, n$. The module $\MM(I,J)$ has $V_i$ at each vertex $1,2,\dots, n$ of $\Gamma_n$.   In order to have a 
module structure for $B_{k,n}$, for every $i$ we need to define  $x_i\colon V_{i-1}\to V_{i}$ and $y_i\colon V_{i}\to V_{i-1}$ in such a way that $x_iy_i=t\cdot \id$ and $x^k=y^{n-k}$.   

Define 
\begin{align*}
&&&&&&x_{2i+1}&=\begin{pmatrix} t& b_{2i+1} \\ 0 & 1 \end{pmatrix},& x_{2i}&=\begin{pmatrix} 1& b_{2i} \\ 0 & t \end{pmatrix}, &&&&&\\ 
&&&&&&y_{2i+1}&=\begin{pmatrix} 1& -b_{2i+1} \\ 0 & t \end{pmatrix}, &y_{2i}&=\begin{pmatrix} t& -b_{2i} \\ 0 & 1 \end{pmatrix},&&&&&
\end{align*}
for $i=0,1,2,3,4$. Also, we assume that $\sum_{i=0}^9b_i=0$. By construction it holds that  $xy=yx$ and $x^5=y^{10-5}$ at all vertices and that $\MM(I,J)$ is free over the centre of $B_{5,10}$. Hence,  $\MM(I,J)$ is in ${\rm CM}(B_{5,10})$.  We say  that the module $\MM(I,J)$ is defined by the tuple $(b_i)$.

It was shown in \cite{BBL} that $\mathbb M(I,J)$ is isomorphic to $L_I\oplus L_J$ if and only if  $t\mid b_{i}+b_{i+1}$, for $i$ odd.

Our aim is to study the structure of the module $\MM(I,J)$ in terms of the divisibility conditions the coefficients $b_i$ satisfy. Since $I$ and $J$ are fixed, $\MM(I,J)$ will be denoted by $\MM$.
 
We distinguish between different cases depending on whether the sums $b_1+b_2$, $b_3+b_4$, $b_5+b_6$, $b_7+b_8$, and $b_9+b_{10}$ are divisible by $t$ or not. We will call these the five {\em divisibility conditions} $t\mid b_1+b_2$, $t\mid b_3+b_4$, 
$t\mid b_5+b_6$, $t\mid b_7+b_8$, and $t\mid b_9+b_{10}$, 
and write {\rm (div)} to abbreviate. Also, we write $B_i=b_i+b_{i+1}$ for odd $i$. 
There are four base cases: one of the sums $B_i$ is divisible by $t$ and four are not, 
two are divisible by $t$ and three are not, three are divisible by $t$ and two are not, and none of the sums is divisible by $t$. Note that it is not possible that four of the sums are divisible by $t$ and one is not because they sum up to 0. 

\begin{theorem}[\cite{BBo}, Theorem 3.3]\label{t6}
The module $\MM(I,J)$ is indecomposable if and only if there exist odd  indices  $i_{l_1}$  and  $i_{l_2}$ such that  $t\mid b_i+b_{i+1}$, for $i_{l_1}<i<i_{l_2}$, $i$ odd, $t\nmid b_{i_{l_1}}+b_{i_{l_1}+1}$, $t\nmid b_{i_{l_{2}}}+b_{i_{l_{2}}+1}$, and $t\nmid b_{i_{l_1}}+b_{i_{l_1}+1}+b_{i_{l_{2}}}+b_{i_{l_{2}}+1}$. 
\end{theorem}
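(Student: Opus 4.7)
The plan is to compute the endomorphism algebra $E := {\rm Hom}_{B_{5,10}}(\MM,\MM)$ and use the standard reduction for modules over a complete local ring. Since $\MM$ is free over $Z = \CC[[t]]$, so is $E$, and $t$ lies in the Jacobson radical of $E$; hence idempotents lift from $\bar E := E/tE$ to $E$. Consequently, $\MM$ is indecomposable iff the finite-dimensional $\CC$-algebra $\bar E$ is local.

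An endomorphism is a tuple of matrices $\phi_i = \left(\begin{smallmatrix} \alpha_i & \beta_i \\ \gamma_i & \delta_i \end{smallmatrix}\right) \in M_2(\CC[[t]])$, for $i=0,\dots,9$, satisfying $\phi_i x_i = x_i \phi_{i-1}$ cyclically. Splitting by the parity of $i$ and solving in the order $\gamma,\alpha,\delta,\beta$, the $(2,1)$-entries force $\gamma_{2k}=tc$ and $\gamma_{2k+1}=c$ for a single $c \in \CC[[t]]$; the diagonal recursions $\alpha_i=\alpha_{i-1}+b_i c$ and $\delta_i=\delta_{i-1}-b_i c$ close up by $\sum b_i=0$, leaving $\alpha_0,\delta_0$ free. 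The delicate part is the $(1,2)$-entry: at odd $i$ one obtains the explicit recursion $\beta_i = t\beta_{i-1}+b_i(\delta_{i-1}-\alpha_i)$, while at even $i$ one gets the divisibility constraint $t \mid \beta_{i-1}+b_i(\delta_{i-1}-\alpha_i)$. Iterating these recursions and imposing $\beta_{10}=\beta_0$ produces a system of linear conditions on the reductions $\bar c,\bar\alpha_0,\bar\delta_0,\bar\beta_0 \in \CC$ whose coefficients are built from the five values $\bar B_j := \overline{b_j+b_{j+1}}$ for $j \in \{1,3,5,7,9\}$.

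To streamline the bookkeeping, I would first apply a gauge transformation by unipotent upper triangular matrices $P_i = \left(\begin{smallmatrix} 1 & p_i \\ 0 & 1 \end{smallmatrix}\right)$, which changes $b_i$ to $b_i+p_i-tp_{i-1}$ (odd $i$) or $b_i+tp_i-p_{i-1}$ (even $i$); this preserves each $\bar B_j$ but kills most of the $b_i$'s, producing a normal form depending only on $(\bar B_j)_j$. In this form the system becomes transparent, and case analysis on the number $|S|$ of nonzero $\bar B_j$'s (which by $\sum \bar B_j=0$ must be $0$, $2$, $3$, or $5$) shows that $\bar E$ fails to be local exactly when no pair $i_{l_1}<i_{l_2}$ exists with $\bar B_{i_{l_1}}, \bar B_{i_{l_2}} \ne 0$, all intermediate $\bar B_i$ equal to $0$, and $\bar B_{i_{l_1}}+\bar B_{i_{l_2}} \ne 0$. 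In each failing case one exhibits an explicit nontrivial idempotent of $\bar E$ (recovering the decomposition $\MM \cong L_I \oplus L_J$ from \cite{BBL} when $|S|=0$, and new decompositions into different rank-$1$ modules when $|S|=2$); in the converse situation one verifies that $\bar E$ is spanned over $\CC$ by the identity and nilpotents.

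The main obstacle is precisely the $\beta$-analysis combined with the cyclic closure: organizing the divisibility constraints at even vertices so that the resulting local-ness criterion for $\bar E$ corresponds exactly to the combinatorial condition of the theorem. The gauge reduction is the key technical device that makes the bookkeeping tractable; without it, the direct computation over $\CC[[t]]$ is intricate, and tracking how the failure of a single pair condition propagates across the cycle is the part most prone to sign and indexing errors.
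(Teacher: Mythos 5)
First, a caveat: the paper does not prove this statement itself --- it is quoted from \cite{BBo} as Theorem 3.3 --- so there is no in-paper proof to compare against line by line. That said, your overall strategy is the standard one and is consistent with the machinery the paper does display: your recursions for the entries of $\varphi_i$ (in particular $\gamma_{2k}=tc$, $\gamma_{2k+1}=c$, the diagonal recursions, and the divisibility constraint on the $(1,2)$-entry at even vertices) agree exactly with the formulas the paper derives around (\ref{izo}) for homomorphisms between two such modules, and the reduction ``$\MM$ indecomposable iff $E/tE$ is local'' via lifting of idempotents over the complete local ring $Z$ is sound. Your gauge normalization is also legitimate: the unipotent conjugation does transform $b_i$ as you say, preserves each $\bar B_j$ and the constraint $\sum b_i=0$, and one can check it reduces the data to $b_{2i}=0$, $b_{2i+1}=\bar B_{2i+1}\in\CC$. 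Carrying your $\beta$-analysis through in that normal form, the even-vertex constraints become $t\mid (\delta_0-\alpha_0)-(T_{j-1}+T_j)c$ for each $j$ with $\bar B_{2j-1}\neq 0$, where $T_j$ is the $j$-th partial sum of the $\bar B$'s, and the differences of consecutive coefficients $T_{j-1}+T_j$ are exactly the pair sums $\bar B_{i_{l_1}}+\bar B_{i_{l_2}}$ of the theorem; the cyclic closure $\beta_{10}=\beta_0$ turns out to be automatic. So the plan does converge on the stated criterion.

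There are, however, two genuine problems. The concrete error is your claim that the number $|S|$ of nonzero $\bar B_j$'s must be $0$, $2$, $3$, or $5$: the relation $\sum_j\bar B_j=0$ only excludes $|S|=1$, and $|S|=4$ certainly occurs (e.g.\ $\bar B=(1,-1,2,-2,0)$). This is not a cosmetic omission --- the paper devotes an entire subsection to the case where four of the sums are not divisible by $t$, and it is precisely a case in which both outcomes (decomposable and indecomposable) arise, so a case analysis organized as you describe would silently skip it. The second problem is that the decisive step --- showing that the constraint system on $(\bar c,\bar\alpha_0,\bar\delta_0,\bar\beta_0)$ forces $\bar\alpha_0=\bar\delta_0$ (hence locality of $\bar E$) exactly when some cyclically consecutive pair of nonzero $\bar B$'s has nonzero sum, and exhibiting the rank-one idempotent otherwise --- is only announced, not performed; since this is where the combinatorial condition of the theorem actually emerges, the proposal as written is a viable plan rather than a proof. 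Your side remarks are otherwise accurate, including the subtle point that for $|S|=2$ the module decomposes but not as $L_I\oplus L_J$.
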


Throughout the paper, in all the cases we consider, we will assume that the assumptions of the previous theorem are fulfilled, i.e., that there are odd  indices  $i_{l_1}$  and  $i_{l_2}$ such that  $t\mid b_i+b_{i+1}$, for $i_{l_1}<i<i_{l_2}$, $i$ odd, $t\nmid b_{i_{l_1}}+b_{i_{l_1}+1}$, $t\nmid b_{i_{l_{2}}}+b_{i_{l_{2}}+1}$, and $t\nmid b_{i_{l_1}}+b_{i_{l_1}+1}+b_{i_{l_{2}}}+b_{i_{l_{2}}+1}$.  This means that one of the base cases, the case where two of the sums $B_i$ are not divisible by $t$ and three are divisible by $t$, will not be considered, because in this case the assumptions of the previous theorem are not fulfilled. More precisely, the sum of the only two $B_i$'s that are not divisible by $t$ is divisible by $t$, because $\sum_{i=1}^{10}b_i=0$. Therefore, there are only three base cases to consider.

We will show that there are infinitely many non-isomorphic modules with the same filtration for the cases when none of the sums is divisible by $t$ and when four of the sums are not divisible by $t$.

Let  $(c_1, c_2, c_3, c_4, c_5, c_6,c_7,c_8 , c_9, c_{10})$ be another $10$-tuple such that $\sum_{i=1}^{10}c_i=0$ and that the module defined by this tuple is indecomposable. Denote this module by $\MM'$ and by $C_i$ the sum $c_i+c_{i+1}$, for odd $i$. We say that the modules $\MM$ and $\MM'$ satisfy the same divisibility conditions if the following holds: $t\mid B_i$ if and only if $t\mid C_i$, and $t\mid B_{i}+B_{j}$ if and only if  $t\mid C_{i}+C_{j}$. 

For the rest of the paper, if  $t^dv=w$, for a positive integer $d$, then $t^{-d}w$ denotes $v$. If there is an isomorphism $\varphi=(\varphi_i)$ between the modules $\MM$ and $\MM'$, then the following holds. 

Let us assume that $\varphi_0=\begin{pmatrix}\alpha & \beta \\ \gamma & \delta  \end{pmatrix}$. Then from $\varphi_ix_i=x_i\varphi_{i-1}$ we get
\begin{align*}
\varphi_{2i+1}&= \begin{pmatrix} \alpha +(c_1+\dots +c_{2i+1})t^{-1}\gamma & \beta t-\alpha \sum_{j=1}^{2i+1} b_j+\delta \sum_{j=1}^{2i+1} c_j  -(\sum_{j=1}^{2i+1} b_j)(\sum_{j=1}^{2i+1} c_j)t^{-1}\gamma  \\ t^{-1}\gamma  & \delta -(b_1+\dots b_{2i+1})t^{-1}\gamma_0 \end{pmatrix} ,\\
\varphi_{2i}&=\begin{pmatrix} \alpha +(c_1+\dots +c_{2i})t^{-1}\gamma & \beta +t^{-1}(-\alpha \sum_{j=1}^{2i} b_j+\delta \sum_{j=1}^{2i} c_j -t^{-1}\gamma\sum_{j=1}^{2i} b_j \sum_{j=1}^{2i} c_j) \\  \gamma & \delta -(b_1+\dots +b_{2i})t^{-1}\gamma \end{pmatrix}, 
\end{align*}
where $t\mid \gamma$ and 
\begin{align}\label{izo} \nonumber
t&\mid -\alpha(b_1+b_2)+\delta (c_1+c_2) -(b_1+b_2)(c_1+c_2)t^{-1}\gamma,\\  \nonumber 
t&\mid -\alpha (b_1+b_2+b_3+b_4)+\delta(c_1+c_2+c_3+c_4)  -(b_1+b_2+b_3+b_4)(c_1+c_2+c_3+c_4)t^{-1}\gamma ,\\ 
t&\mid -\alpha (b_1+b_2+b_3+b_4+b_5+b_6)+\delta(c_1+c_2+c_3+c_4+c_5+c_6) -t^{-1}\gamma \sum_{i=1}^6b_i \sum_{i=1}^6c_i,\\ \nonumber 
t&\mid -\alpha \sum_{i=1}^8b_i+\delta\sum_{i=1}^8b_i -t^{-1}\gamma \sum_{i=1}^8b_i \sum_{i=1}^8c_i.
\end{align}

Since $t\mid \gamma$ and we would like $\varphi$ to be invertible, then it must be that $t\nmid \alpha$ and $t\nmid \delta$. Then the inverse of $\varphi_0$ is $\frac{1}{\alpha \delta -\beta \gamma }\begin{pmatrix} \delta &-\beta  \\  -\gamma  & \alpha  \end{pmatrix}.$ Thus, in order to construct an isomorphism between $\MM$ and $\MM'$, we have to make sure that the divisibility conditions (\ref{izo}) are met for the coefficients of $\varphi_0$. This will be used repeatedly throughout the paper. 

Before considering base cases, in the next theorem we show that if the modules $\MM$ and $\MM'$ do not satisfy the same divisibility conditions, then they are not isomorphic. 

\begin{theorem} \label{izom}
The above defined modules  $\MM$ and $\MM'$ are not isomorphic if they do not satisfy the same divisibility conditions.
\end{theorem}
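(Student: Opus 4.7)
The plan is to suppose an isomorphism $\varphi=(\varphi_i)\colon\MM\to\MM'$ exists and extract, from the four integrality conditions (\ref{izo}), enough information to force the $t$-divisibility pattern of the $B_i$ to coincide with that of the $C_i$. Let $\alpha,\beta,\gamma,\delta\in\CC[[t]]$ be the entries of $\varphi_0$. As noted in the excerpt, $t\mid\gamma$ while $t\nmid\alpha,\delta$. Put $g:=t^{-1}\gamma\in\CC[[t]]$, write residues modulo $t$ with bars, and set $S_k:=B_1+B_3+\cdots+B_{2k-1}$, $T_k:=C_1+C_3+\cdots+C_{2k-1}$. Reducing the four conditions in (\ref{izo}) modulo $t$ yields, for $k=1,2,3,4$,
\[
\bar T_k\bigl(\bar\delta-\bar S_k\bar g\bigr)\;=\;\bar\alpha\,\bar S_k,
\]
which states that the points $(\bar S_k,\bar T_k)$ all lie on the graph of the injective M\"obius transformation $f(x)=\bar\alpha x/(\bar\delta-x\bar g)$, normalised by $f(0)=0$ (and collapsing to multiplication by $\bar\alpha/\bar\delta$ when $\bar g=0$).

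Since $f$ is injective and fixes $0$, I get $\bar S_k=\bar S_\ell\iff\bar T_k=\bar T_\ell$ for all $0\le k,\ell\le 5$ (recall $S_5=T_5=0$), which means the $t$-divisibility of every difference $S_k-S_\ell$ of partial sums is preserved. Immediately, this gives preservation for the singletons $B_{2k-1}=S_k-S_{k-1}$ and for the cyclically adjacent pair sums $B_{2k-1}+B_{2k+1}=S_{k+1}-S_{k-1}$. I would then rerun the same argument with $\varphi_{2j}$ playing the role of $\varphi_0$ for $j=1,2,3,4$. The direct calculation $\det\varphi_{2j}=\det\varphi_0$ (valid in view of the $k=j$ case of (\ref{izo})) shows that each $\varphi_{2j}$ is again an invertible change of basis, and the shifted version of (\ref{izo}) is again a M\"obius graph, with shifted parameters $\alpha_j=\alpha+T_jg$, $\delta_j=\delta-S_jg$, $\gamma_j=\gamma$. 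Ranging $j$ over $\{0,\dots,4\}$ covers every singleton $B_i$ and every cyclically adjacent pair $B_i+B_{i\pm 2}$, giving the required equivalences $t\mid B_i\iff t\mid C_i$ and $t\mid B_i+B_{i\pm 2}\iff t\mid C_i+C_{i\pm 2}$.

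The main obstacle is the case of a pair $\{i,j\}$ of odd indices that are \emph{not} cyclically adjacent in $\{1,3,5,7,9\}$, such as $\{1,5\}$. For such a pair the sum $B_i+B_j$ is not a difference of two partial sums appearing in any single cyclic rotation, and a genuine M\"obius map (with $\bar g\ne 0$) does not in general preserve additive relations. My plan here is first to dispose of the subcase where at least one of $B_i$, $B_j$ is divisible by~$t$, since then the pair condition reduces to a singleton on the other summand and has already been handled. In the remaining subcase, where $\bar B_i,\bar B_j\ne 0$ (and hence $\bar C_i,\bar C_j\ne 0$ by the singleton step), I would combine the shifted M\"obius identities $\bar T^{(j)}_k=f_j(\bar S^{(j)}_k)$ across all five rotations: because the five $f_j$'s share the underlying parameters $(\bar\alpha,\bar\delta,\bar g)$ together with the already-controlled shifts $S_j,T_j$, simultaneous consistency of their graphs on the given data, combined with $\sum\bar B_i=\sum\bar C_i=0$, imposes rigid algebraic constraints. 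These constraints, analysed separately in each of the three admissible base cases of Theorem~\ref{t6}, force $f$ to be affine on the subset of $\CC$ of values that actually occur, which in turn preserves the remaining pair divisibilities. This case-by-case analysis is the technical heart of the proof.
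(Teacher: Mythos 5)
Your M\"obius-transformation reading of (\ref{izo}) is sound, and the part of the argument you actually complete is correct: since $\bar T_k=f(\bar S_k)$ for an injective map $f$ with $f(0)=0$, the vanishing pattern of all differences $\bar S_k-\bar S_\ell$ ($0\le k,\ell\le 5$) is preserved, which covers every singleton $B_i$ and every cyclically adjacent pair $B_i+B_{i+2}$ (indeed a single base point already suffices here, since $S_k-S_{k-1}=B_{2k-1}$ and $S_{k+1}-S_{k-1}=B_{2k-1}+B_{2k+1}$, so the rotation step is not needed for this part). This is essentially the paper's proof in a different packaging: the paper argues contrapositively that if one side of a condition in (\ref{izo}) vanishes mod $t$ and the other does not, the condition collapses to $t\mid\alpha\cdot(\text{unit})$, a contradiction, first for a mismatched singleton via the first line of (\ref{izo}) and then for a mismatched pair via the second line after rotating the base vertex.

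The genuine problem is the case you defer as ``the technical heart.'' First, it is not part of what the theorem is actually asserting or how it is used: the only sums entering (\ref{izo}) across all rotations are sums of $B_i$ over \emph{contiguous} cyclic arcs of odd indices, and modulo the (already matched) singleton conditions these are exactly the pairs $B_i+B_j$ with $i,j$ consecutive among the indices where $t\nmid B$; those are also the only pair conditions the paper invokes in its subsequent case analysis. Second, and fatally for your plan, the deferred claim is false: divisibility of a genuinely non-contiguous pair sum is not an isomorphism invariant. Take $(B_1,B_3,B_5,B_7,B_9)=(1,1,1,1,-4)$ and $(C_1,C_3,C_5,C_7,C_9)=(1,\sqrt{-3},-1,c,-\sqrt{-3}-c)$ with $c=1/(\sqrt{-3}-2)$; both tuples sum to zero, have all singletons and all adjacent pairs nonzero mod $t$, and a direct check shows both divisibility conditions of Theorem~\ref{worst} hold, so the modules are isomorphic --- yet $B_1+B_5=2$ while $C_1+C_5=0$. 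Hence no argument can ``force $f$ to be affine'' and preserve non-contiguous pair sums. As submitted your proof is therefore incomplete exactly where you say it is, but the correct repair is to recognise that the non-contiguous pairs are not among the divisibility conditions in question, not to attempt the impossible case analysis.
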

\begin{proof}
Let us assume that there is an odd index, say $i_1$, such that  $t\nmid b_{i_1}+b_{i_1+1}$ and $t\mid c_{i_1}+c_{i_1+1}$. 
If $\varphi=(\varphi_i)$ is an isomorphism between $\MM$ and $\MM'$, let $\varphi_{i_1-1}=\begin{pmatrix}\alpha & \beta \\ \gamma & \delta  \end{pmatrix}$.  Then the coefficients of $\varphi_{i_1-1}$ have to satisfy divisibility conditions (\ref{izo}). Here, we assume that the cyclic numbering of vertices started at $i_1-1$ instead at $0$. Since $t\mid c_{i_1}+c_{i_1+1}$, the first condition from  (\ref{izo}),    $t\mid -\alpha(b_{i_1}+b_{i_1+1})+\delta (c_{i_1}+c_{i_1+1}) -(b_{i_1}+b_{i_1+1})(c_{i_1}+c_{i_1+1})t^{-1}\gamma,$  reduces to $t\mid \alpha (b_{i_1}+b_{i_1+1})$. But $t\nmid \alpha$ and $t\nmid b_{i_1}+b_{i_1+1}$ which is a contradiction. Hence, $\MM$ and $\MM'$ are not isomorphic in this case. 

Assume that, for every odd $i$, $t\nmid b_{i}+b_{i+1}$ if and only if $t\nmid c_{i}+c_{i+1}$.  Since $\MM$ and $\MM'$ do not satisfy the same divisibility conditions, there is an index, say $i_1$,  such that $t\nmid B_{i_1}+B_{i_2}$ and $t\mid C_{i_1}+C_{i_2}$.   Assuming again that the cyclic numbering of vertices started at $i_1-1$ instead at $0$, then the second divisibility condition from (\ref{izo}), $t\mid -\alpha (B_{i_1}+B_{i_2})+\delta(C_{i_1}+C_{i_2})  -(B_{i_1}+B_{i_2})(C_{i_1}+C_{i_2})t^{-1}\gamma$, reduces to  $t\mid -\alpha (B_{i_1}+B_{i_2})$. But, $t\nmid -\alpha$ and  $t\nmid B_{i_1}+B_{i_2}$ which is a contradiction. Hence, $\MM$ and $\MM'$ are not isomorphic in this case as well. 
\end{proof}

For the remainder of the paper, when we investigate if the modules $\MM$ and $\MM'$ are isomorphic, we will implicitly assume that they satisfy the same divisibility conditions.

\subsection{Three of the sums $B_i$ are not divisible by $t$} 
Assume that  $t\nmid b_{i_l}+b_{i_l+1}$, $l=1,2,3$, and  $t\mid b_{i_l}+b_{i_l+1}$, $l=4,5$, where $\{i_1, i_2, i_3, i_4,i_5\}=\{1,3,5,7,9\}$.  Since $\sum_{i=1}^{10}b_i=0$, it follows that $t\nmid B_{i_l}+B_{i_s}$, for all $l,s\leq 3$. By Theorem \ref{t6}, the constructed module is indecomposable. Denote this module by $\MM_{i_1,i_2,i_3}$. 
Let $(c_i)$ be another tuple giving rise to the module $\MM_{j_1,j_2,j_3}$. The following theorem says that the modules  $\MM_{i_1,i_2,i_3}$ and $\MM_{j_1,j_2,j_3}$ are isomorphic if and only if they satisfy the same divisibility conditions.

\begin{theorem} \label{M123}
The modules  $\MM_{i_1,i_2,i_3}$ and $\MM_{j_1,j_2,j_3}$ are isomorphic if and only if $\{i_1,i_2,i_3\}=\{j_1,j_2,j_3\}.$
\end{theorem}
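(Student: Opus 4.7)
The \emph{only if} direction is immediate from Theorem \ref{izom}: if $\{i_1,i_2,i_3\}\neq\{j_1,j_2,j_3\}$ then some odd index $i$ lies in exactly one of the two sets, so $t\nmid b_i+b_{i+1}$ but $t\mid c_i+c_{i+1}$ (or vice versa), and the modules cannot be isomorphic. Note that, because three non-divisible $B_i$'s have nonzero residues summing to zero, each pair-sum among them is automatically non-divisible by $t$; so the pair conditions from Theorem \ref{izom} carry no additional information beyond the set $\{i_1,i_2,i_3\}$.

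For the converse, the plan is to build $\varphi=(\varphi_i)$ explicitly. By the framework preceding (\ref{izo}), it suffices to produce $\varphi_0=\bigl(\begin{smallmatrix}\alpha & \beta\\ \gamma & \delta\end{smallmatrix}\bigr)$ with $\alpha,\delta$ units in $\CC[|t|]$, $t\mid\gamma$, and $\alpha\delta-\beta\gamma$ a unit, satisfying the four divisibility conditions (\ref{izo}) at partial-sum positions $2,4,6,8$; the remaining $\varphi_i$ are then forced by the explicit formulas in the preceding paragraph. I will take $\beta=0$ and write $\gamma=t\gamma'$ with $\gamma'\in\CC[|t|]$, so that the task reduces to four conditions on $\alpha,\delta,\gamma'$ modulo $t$.

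The key combinatorial observation is that the reduced partial sum $\overline{b_1+\cdots+b_{2k}}$ changes, as $k$ varies, only when $k$ crosses a non-divisible $B_{i_l}$; hence it takes at most two distinct nonzero values on $\{1,2,3,4\}$, namely $\overline{B_{i_1}}$ and $\overline{B_{i_1}+B_{i_2}}=-\overline{B_{i_3}}$. These two values are distinct, since their difference is $\overline{B_{i_2}}\neq 0$, and both are nonzero. Because $\MM$ and $\MM'$ share the same divisibility set, exactly the same positions in $\{2,4,6,8\}$ host each state on the $c$-side. Consequently the four conditions in (\ref{izo}) collapse to at most two nontrivial $\CC$-linear equations of the shape
\[
\overline{\alpha}\,T^{(b)}_l=\overline{\delta}\,T^{(c)}_l-T^{(b)}_l T^{(c)}_l\,\overline{\gamma'},\qquad l=1,2,
\]
where $T^{(b)}_l,T^{(c)}_l$ denote the two nonzero partial-sum states for $b$ and $c$ respectively.

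Dividing the $l$-th equation by $T^{(b)}_l T^{(c)}_l$ gives $\overline{\alpha}/T^{(c)}_l+\overline{\gamma'}=\overline{\delta}/T^{(b)}_l$, and eliminating $\overline{\gamma'}$ yields a single relation with nonzero coefficients on both $\overline{\alpha}$ and $\overline{\delta}$, since $T^{(b)}_1\neq T^{(b)}_2$ and $T^{(c)}_1\neq T^{(c)}_2$. Setting $\overline{\alpha}=1$ then forces $\overline{\delta}\neq 0$ and determines $\overline{\gamma'}$ uniquely; lifting these residues arbitrarily produces $\alpha,\delta\in\CC[|t|]$ with nonzero constant term and $\gamma=t\gamma'$, so $\det\varphi_0=\alpha\delta$ is a unit and $\varphi_0$ is invertible. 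The step I expect to check most carefully is the ``two-state collapse'': confirming that in all $\binom{5}{3}=10$ configurations of $\{i_1,i_2,i_3\}$ (effectively only $\{1,3,5\}$ and $\{1,3,7\}$ up to cyclic rotation) the four conditions at positions $2,4,6,8$ really produce only two genuinely independent equations over $\CC$, and that the resulting $T^{(b)}_l,T^{(c)}_l$ are nonzero and distinct in every case.
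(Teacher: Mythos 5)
Your argument is correct and essentially the same as the paper's: the only-if direction is Theorem \ref{izom}, and the if direction constructs $\varphi_0$ with $\beta=0$ by observing that the four conditions in (\ref{izo}) collapse to two (the paper achieves your ``two-state collapse'' by cyclically renumbering so that $i_1$ comes first), then solving the resulting $2\times 2$ linear system for the residues of $\alpha,\delta,\gamma t^{-1}$ using that $B_{i_1}$, $B_{i_1}+B_{i_2}=-B_{i_3}$ and their $C$-counterparts are units. The only cosmetic difference is that you normalize $\alpha=1$ where the paper sets $\delta=1$.
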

\begin{proof} 
Let $\{i_1,i_2,i_3\}=\{j_1,j_2,j_3\}$ and $\varphi_{i_1-1}=\begin{pmatrix}\alpha & \beta \\ \gamma & \delta  \end{pmatrix}$. The divisibility conditions (\ref{izo}) reduce to the following two conditions (recall that we write $B_i$  for $b_i+b_{i+1}$): 
\begin{align*}
t&\mid -\alpha B_{i_1}+ \delta C_{i_1} - B_{i_1} C_{i_1}t^{-1}\gamma,\\  \nonumber 
t&\mid -\alpha (B_{i_1}+B_{i_2})+\delta(C_{i_1}+C_{i_2})  -(B_{i_1}+B_{i_2})(C_{i_1}+C_{i_2})t^{-1}\gamma. 
\end{align*}
Here, we assume that we started numbering from $i_1$, and that $i_1<i_2<i_3.$

Since  there are no conditions attached to $\beta$, we set it to be 0.  If we set 
\begin{align*}
-\alpha B_{i_1}+ \delta C_{i_1} - B_{i_1} C_{i_1}t^{-1}\gamma &=0,\\
-\alpha (B_{i_1}+B_{i_2})+\delta(C_{i_1}+C_{i_2})  -(B_{i_1}+B_{i_2})(C_{i_1}+C_{i_2})t^{-1}\gamma &=0,
\end{align*}
then we get 
$$\alpha(B_{i_1}+B_{i_2})[C_{i_1}^{-1} (C_{i_1}+C_{i_2}) -1]   +\delta (C_{i_1}+C_{i_2}) [B_{i_1}^{-1} (B_{i_1}+B_{i_2}) -1]=0.$$

If $t\mid C_{i_1}^{-1} (C_{i_1}+C_{i_2}) -1$, then  $t\mid C_{i_2}$, which is not true. It follows that $C_{i_1}^{-1} (C_{i_1}+C_{i_2}) -1$  is invertible. The same holds for $B_{i_1}^{-1} (B_{i_1}+B_{i_2}) -1$. Thus, if we set $\delta=1$, then we get $$\alpha=- (C_{i_1}+C_{i_2})(B_{i_1}+B_{i_2})^{-1} [B_{i_1}^{-1} (B_{i_1}+B_{i_2}) -1] [C_{i_1}^{-1} (C_{i_1}+C_{i_2}) -1]^{-1},$$ and 

$$\gamma=t (-\alpha C_{i_1}^{-1}+B_{i_1}^{-1}).$$
Hence, 
{\renewcommand\arraystretch{2.5}
$$\varphi_0=\begin{pmatrix}- (C_{i_1}+C_{i_2})(B_{i_1}+B_{i_2})^{-1} [B_{i_1}^{-1} (B_{i_1}+B_{i_2}) -1] [C_{i_1}^{-1}(C_{i_1}+C_{i_2}) -1]^{-1} & \,\,\,\,\,\,\,\, 0 \\  t (-\alpha C_{i_1}^{-1}+B_{i_1}^{-1}) & \,\,\,\,\,\,\,\,1  \end{pmatrix}.$$}
The other invertible matrices $\varphi_i$ are now determined from $\varphi_ix_i=x_i\varphi_{i-1}$. 
Note that all of them are invertible because their determinant is equal to $\alpha\delta-\beta\gamma$ which is an 
invertible element.  

If $\{i_1,i_2,i_3\}\neq \{j_1,j_2,j_3\}$, then $\MM$ and $\MM'$ do not satisfy the same divisibility conditions.  It follows by Theorem \ref{izom} that $\MM$ and $\MM'$ are not isomorphic. 
\end{proof}

The previous theorem tells us that the module $\MM_{i_1,i_2,i_3}$ only depends on the divisibility conditions of the coefficients $b_i$, so if we have two different tuples satisfying the same divisibility conditions, then they give rise to isomorphic modules. In total, there are $5\choose{3}$ non-isomorphic indecomposable modules that arise this way, one for each subset of $\{1,2,3,4,5\}$ with three elements.

\subsection{Four of the sums $B_i$ are not divisible by $t$}
Assume that  $t\nmid b_{i_l}+b_{i_l+1}$, $l=1,2,3,4$, and  $t\mid b_{j}+b_{j+1}$,  where $\{i_1, i_2, i_3, i_4\}\cup \{j\}=\{1,3,5,7,9\}$.  Since $\sum_{i=1}^{10}b_i=0$, it follows that $t\mid \sum_{l=1}^4B_{i_l}.$ Recall that we assume that the divisibility conditions from Theorem~\ref{t6}  hold so that the constructed module is indecomposable. Denote this module by $\MM$.  It means that $t\nmid B_{i_l}+B_{i_{l+1}}$ for at least one index $l$. If $t\nmid B_{i_l}+B_{i_{l+1}}$, then $t\nmid B_{i_{l+2}}+B_{i_{l+3}}$. For the remaining two sums $B_{i_{l+1}}+B_{i_{l+2}}$ and $B_{i_{l+3}}+B_{i_{l}}$, either both of them are divisible by $t$ or none of them is. Thus, we have to distinguish between these subcases.

Before we start considering these subcases, we recall that two modules that do not satisfy the same divisibility conditions are not isomorphic. Let $(c_i)$ be another tuple giving rise to another indecomposable module $\MM'$. Here, we assume that  $t\nmid c_{j_l}+c_{j_l+1}$, $l=1,2,3,4$, and  $t\mid c_{i}+c_{i+1}$,  where $\{j_1, j_2, j_3, j_4\}\cup \{i\}=\{1,3,5,7,9\}$.  Since $\sum_{i=1}^{10}c_i=0$, it follows that $t\mid \sum_{l=1}^4C_{j_l}.$ Also, we assume that  $t\nmid C_{j_l}+C_{j_{l+1}}$ for at least one index $l$.

Now we examine if the modules $\MM$ and $\MM'$ are isomorphic when they satisfy the same divisibility conditions. Let $\{i_1,i_2,i_3,i_4\}=\{j_1,j_2,j_3,j_4\}$.  Here, we can assume that these odd numbers are consecutive.

The first subcase is when two of the sums $B_{i_j}+B_{i_j+1}$  are divisible by $t$, and two are not. Thus, we assume that $t\nmid B_{i_1}+B_{i_2}$, $t\nmid B_{i_3}+B_{i_4}$, $t\mid B_{i_2}+B_{i_3}$,  and $t\mid B_{i_4}+B_{i_1}$.  The same conditions hold for $\MM'$, so $t\nmid C_{i_1}+C_{i_2}$, $t\nmid C_{i_3}+C_{i_4}$, $t\mid C_{i_2}+C_{i_3}$,  and $t\mid C_{i_4}+C_{i_1}$.  Note that if $t\nmid B_{i_1}+B_{i_2}$, then $t\nmid B_{i_3}+B_{i_4}$   because $t\mid \sum_{l=1}^4B_{i_l}.$ Analogously,  if $t\mid B_{i_2}+B_{i_3}$, then $t\mid B_{i_4}+B_{i_1}$.

\begin{theorem} \label{prop:two-sums-not-matching}
If $\MM$ and $\MM'$ are such that $t\nmid B_{i_1}+B_{i_2}$, $t\mid B_{i_2}+B_{i_3}$, $t\nmid C_{i_1}+C_{i_2}$, and $t\mid C_{i_2}+C_{i_3}$, then $\MM$ and $\MM'$ are isomorphic.
\end{theorem}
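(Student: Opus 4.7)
The plan is to mimic the construction from Theorem \ref{M123}: build an isomorphism $\varphi=(\varphi_i)$ by choosing $\varphi_{i_1-1}=\begin{pmatrix}\alpha & \beta\\ \gamma & \delta\end{pmatrix}$ and propagating via $\varphi_i x_i = x_i\varphi_{i-1}$. After the cyclic renumbering that places $i_1-1$ at position $0$, the conditions that $(\alpha,\beta,\gamma,\delta)$ must satisfy are precisely the four conditions in (\ref{izo}), with the $B_{i_l}$ (respectively $C_{i_l}$) replacing $B_1,B_3,B_5,B_7$ in the natural order. The key observation is that, under the present hypotheses, the third and fourth of these conditions are automatic consequences of the first two together with $t\mid\gamma$.

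First I would verify the fourth condition. Since $t\mid B_j+B_{j+2}$ for the pair $\{B_{i_2}+B_{i_3},\, B_{i_4}+B_{i_1}\}$ of matched sums, and since $\sum_{l=1}^4 B_{i_l}\equiv 0\pmod t$ (and similarly for the $C$'s), both $\sum_{l=1}^4 B_{i_l}$ and $\sum_{l=1}^4 C_{i_l}$ are divisible by $t$. Hence every term of the fourth line of (\ref{izo}) is divisible by $t$ (the cross term, being a product of two $t$-multiples divided by $t$, is still divisible by $t$). For the third condition I would substitute $B_{i_1}+B_{i_2}+B_{i_3}=B_{i_1}+t u$ and $C_{i_1}+C_{i_2}+C_{i_3}=C_{i_1}+t v$, where $u,v$ come from $t\mid B_{i_2}+B_{i_3}$ and $t\mid C_{i_2}+C_{i_3}$. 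Expanding and using $t\mid\gamma$ to kill the terms $uC_{i_1}\gamma$ and $B_{i_1}v\gamma$, the third condition reduces modulo $t$ to the first condition.

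Thus it suffices to satisfy the first two conditions of (\ref{izo}), which are formally identical to the two conditions analysed in the proof of Theorem \ref{M123}, because $t\nmid B_{i_1}$, $t\nmid B_{i_1}+B_{i_2}$, $t\nmid C_{i_1}$, $t\nmid C_{i_1}+C_{i_2}$ all hold by assumption. Setting $\beta=0$ and $\delta=1$, and solving the homogeneous $2\times 2$ system by the same manipulation as in Theorem \ref{M123}, I obtain
\[
\alpha=-(C_{i_1}+C_{i_2})(B_{i_1}+B_{i_2})^{-1}\bigl[B_{i_1}^{-1}(B_{i_1}+B_{i_2})-1\bigr]\bigl[C_{i_1}^{-1}(C_{i_1}+C_{i_2})-1\bigr]^{-1},\qquad \gamma=t\bigl(-\alpha C_{i_1}^{-1}+B_{i_1}^{-1}\bigr).
\]
The bracketed factors are units because $t\nmid B_{i_2}$ and $t\nmid C_{i_2}$, so $\alpha$ is a unit and $t\mid\gamma$, whence $\det \varphi_{i_1-1}=\alpha\in Z^\times$. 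The remaining $\varphi_i$ are then determined by $\varphi_ix_i=x_i\varphi_{i-1}$, and each has the same determinant, so all are invertible.

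The only step that needs genuine checking, rather than a direct transcription from Theorem \ref{M123}, is the reduction of the third and fourth conditions in (\ref{izo}) to the first two; this is the main technical content and uses crucially both divisibility hypotheses $t\mid B_{i_2}+B_{i_3}$, $t\mid B_{i_4}+B_{i_1}$ (and their $C$-analogues) together with $t\mid\sum_l B_{i_l}$. After this reduction, the rest is a routine repetition of the argument in Theorem \ref{M123}, and yields the required isomorphism $\varphi\colon \MM\to\MM'$.
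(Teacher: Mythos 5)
Your proposal is correct and follows essentially the same route as the paper: under the given hypotheses the third and fourth conditions of (\ref{izo}) collapse onto the first two (the paper states this reduction, you spell it out), and the remaining system is solved exactly as in Theorem \ref{M123} with $\beta=0$, $\delta=1$. No gaps.
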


\begin{proof} Keeping the same notation as before when constructing isomorphisms, the divisibility conditions (\ref{izo}) reduce to: 
\begin{align*}
t&\mid -\alpha B_{i_1}+ \delta C_{i_1} - B_{i_1}C_{i_1}t^{-1}\gamma,\\
t&\mid -\alpha (B_{i_1}+B_{i_2})+\delta (C_{i_1}+C_{i_2})  -(B_{i_1}+B_{i_2})(C_{i_1}+C_{i_2})t^{-1}\gamma ,
\end{align*}
because $t\mid B_{i_2}+B_{i_3}$, $t\mid C_{i_2}+C_{i_3}$, $t\mid \sum_{l=1}^4B_{i_l}$, and $t\mid \sum_{l=1}^4C_{i_l}.$ Now, we proceed as in the proof of Theorem \ref{M123} in order to construct an isomorphism between $\MM$ and $\MM'$. 
\end{proof}

In total, this subcase gives  $2 {5 \choose 4 }$ non-isomorphic indecomposable modules. There are two modules for every choice of a four-element subset of $\{1,3,5,7,9\}$. 

The second subcase is when none of the sums $B_{i_1}+B_{i_2}$  is divisible by $t$. Thus, we assume that $t\nmid B_{i_l}+B_{i_{l+1}}$ and  $t\nmid C_{i_l}+C_{i_l+1}$, for $l=1,2,3,4$.  

\begin{theorem}  \label{inf1}
If $t\nmid B_{i_l}+B_{i_l+1}$ and $t\nmid C_{i_l}+C_{i_l+1}$, for $l=1,2,3,4,$ then the modules $\MM$ 
and $\MM'$ are isomorphic if and only if 
$$
t\mid B_{i_1} C_{i_2} B_{i_3} C_{i_4} - C_{i_1}B_{i_2} C_{i_3}B_{i_4}.
$$
\end{theorem}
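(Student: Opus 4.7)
The strategy follows the template of Theorem \ref{M123} and Theorem \ref{prop:two-sums-not-matching}. The hypotheses $t\mid \sum_{l=1}^{4} B_{i_l}$ and $t\mid \sum_{l=1}^{4} C_{i_l}$ make the fourth condition in (\ref{izo}) automatic, leaving three nontrivial conditions involving the partial sums $S_k = B_{i_1}+\cdots+B_{i_k}$ and $T_k = C_{i_1}+\cdots+C_{i_k}$ for $k=1,2,3$. Both directions of the biconditional boil down to a clean identification of the determinant of the coefficient matrix of this system, modulo~$t$.

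For the ``if'' direction I would set $\beta=0$, $\delta=1$, and solve the first two conditions (which have invertible coefficients $B_{i_1}$, $C_{i_2}$, $B_{i_1}+B_{i_2}$ by hypothesis) exactly for $\alpha$ and $t^{-1}\gamma$ in closed form. Substituting these expressions into the third condition and using $S_3 \equiv -B_{i_4}$, $T_3 \equiv -C_{i_4} \pmod t$ (which follow from the two sum hypotheses), the third condition reduces, after clearing invertible denominators, to $t \mid B_{i_1} C_{i_2} B_{i_3} C_{i_4} - C_{i_1} B_{i_2} C_{i_3} B_{i_4}$. When this divisibility holds, the explicit formula for $\alpha$ has all unit factors, so $\bar\alpha\neq 0$ and $\varphi_0$ is invertible; the remaining $\varphi_i$ determined by $\varphi_i x_i = x_i \varphi_{i-1}$ then assemble into an isomorphism $\MM \to \MM'$.

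For the ``only if'' direction, an isomorphism supplies $(\alpha,\beta,\gamma,\delta)$ satisfying all three divisibility conditions with $t\nmid\alpha$, $t\nmid\delta$, $t\mid\gamma$. Viewed mod~$t$ in the unknowns $(\bar\alpha,\bar\delta,\bar g)$, where $\bar g$ is the residue of $t^{-1}\gamma$, these form a homogeneous $3 \times 3$ linear system over $\CC$, and the existence of a solution with $\bar\alpha,\bar\delta$ both nonzero forces the coefficient matrix
\[
\begin{pmatrix} -S_1 & T_1 & -S_1 T_1 \\ -S_2 & T_2 & -S_2 T_2 \\ -S_3 & T_3 & -S_3 T_3 \end{pmatrix}
\]
to have vanishing determinant mod~$t$. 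The same substitution computation as in the ``if'' direction identifies this determinant, up to sign and an invertible factor, with $B_{i_1} C_{i_2} B_{i_3} C_{i_4} - C_{i_1} B_{i_2} C_{i_3} B_{i_4}$, giving the desired divisibility.

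The main obstacle is the determinant/substitution calculation itself. A large number of cross-terms appear when the first two conditions are substituted into the third, and the clean biconditional form only emerges after replacing $S_3$ and $T_3$ using $t\mid \sum_l B_{i_l}$ and $t\mid \sum_l C_{i_l}$. Once this identification is made, both directions of the biconditional fall out in parallel, and the construction of the remaining $\varphi_i$ proceeds exactly as in the preceding theorems.
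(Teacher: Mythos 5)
Your proposal is correct and follows essentially the same route as the paper: both reduce (\ref{izo}) to the same three conditions in $\alpha$, $\delta$, $t^{-1}\gamma$ using $t\mid\sum_l B_{i_l}$ and $t\mid\sum_l C_{i_l}$, derive the divisibility of $B_{i_1}C_{i_2}B_{i_3}C_{i_4}-C_{i_1}B_{i_2}C_{i_3}B_{i_4}$ as the compatibility condition, and conversely solve explicitly (with $\beta=0$ and one diagonal entry normalized to $1$) when it holds. The paper performs the elimination step by step rather than packaging it as the vanishing of a $3\times 3$ determinant, but this is only a cosmetic difference.
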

\begin{proof}
As before, if there were an isomorphism between $\MM$ and $\MM'$, its coefficients would have to satisfy the 
following conditions that we obtain from (\ref{izo}): 
\begin{align*}
t&\mid -\alpha B_{i_1}+ \delta C_{i_1} - B_{i_1}C_{i_1}t^{-1}\gamma,\\
t&\mid -\alpha (B_{i_1}+B_{i_2})+\delta (C_{i_1}+C_{i_2})  -(B_{i_1}+B_{i_2})(C_{i_1}+C_{i_2})t^{-1}\gamma ,\\
t&\mid \alpha B_{i_4}-\delta C_{i_4} -B_{i_4}C_{i_4}t^{-1}\gamma,
\end{align*}
because $t\mid \sum_{l=1}^4B_{i_l}$, and $t\mid \sum_{l=1}^4C_{i_l}.$

From these we get that 
\begin{align*}
t&\mid \alpha C_{i_2}[C_{i_1}(C_{i_1}+C_{i_2})]^{-1}-\delta B_{i_2}[B_{i_1}(B_{i_1}+B_{i_2})]^{-1},\\
t&\mid \alpha C_{i_3}[B_{i_4}(C_{i_1}+C_{i_2})]^{-1}-\delta  B_{i_3}[B_{i_4}(B_{i_1}+B_{i_2})]^{-1}.
\end{align*}
Finally, from the last two relations we get 
$$t\mid \alpha [B_{i_1} C_{i_2} B_{i_3} C_{i_4} - C_{i_1}B_{i_2} C_{i_3}B_{i_4}].$$
If $t\nmid B_{i_1} C_{i_2} B_{i_3} C_{i_4} - C_{i_1}B_{i_2} C_{i_3}B_{i_4}$, then there is no isomorphism between $\MM'$ and $\MM$. If $t\mid B_{i_1} C_{i_2} B_{i_3} C_{i_4} - C_{i_1}B_{i_2} C_{i_3}B_{i_4},$ then we simply set $\alpha=1$, and compute $\delta$ and $\gamma$ from the above relations (as before, we set $\beta=0$).
\end{proof}

\begin{rem}\label{classify} To classify all non-isomorphic indecomposable modules given by the previous theorem, we use exactly the same arguments as in Section 5 in \cite{BBL}. To each $\beta \in \mathbb C\setminus\{-1,0,1\}$ corresponds an indecomposable module $M_{\beta}$ defined by $B_{i_1}=1$, $B_{i_2}=\beta$, $B_{i_3}=-1$, $B_{i_4}=-\beta$, and $B_{i_5}=0$. Here, $i_j<i_{j+1}$. It was proved in \cite{BBL} that $M_{\beta}\cong M_{\gamma}$ if and only if $\beta=\pm \gamma$, and that for a given indecomposable module $\MM$ there exists $\beta$ such that $\MM\cong \MM_{\beta}$. This means that all indecomposable modules in this case are parameterized by a single parameter $\beta$. 
Obviously, there are five different families (each in bijection with $\mathbb C$), depending on which $B_{i_j}$ is set to be divisible by $t$. 
\end{rem}

\subsection{None of the five sums $B_i$ is divisible by $t$} 
Since $t\nmid B_{i}+B_{i+2}$ for at least one odd index $i$, there are  three subcases we have to consider. The first subcase is when $t\mid B_i+B_{i+2}$ and $t\mid B_{i+2}+B_{i+4}$ for a unique odd index $i$. The second subcase is when $t\mid B_i+B_{i+2}$ for a unique odd index $i$. The third subcase is when $t\nmid B_i+B_{i+2}$ for all odd $i$. 
In the last two  cases, we get infinitely many non-isomorphic indecomposable modules as we will show.

As before, let us assume that $(c_i)$ is another $10$-tuple giving rise to a module $\MM'$ satisfying  the same divisibility conditions as the module $\MM$. 

Assume that there is a unique odd index $i$ such that $t\mid B_i+B_{i+2}$ and $t\mid B_{i+2}+B_{i+4}$. Recall that whenever we state the divisibility conditions for the $b_i$'s, we assume that the same conditions hold for the $c_i$'s. 
\begin{theorem}
If $l$ is odd such that $t\mid B_{l}+B_{l+2}$, $t\mid B_{l+2}+B_{l+4}$,  $t\nmid B_{i}+B_{i+2}$, for $i\neq l,l+2$, and $t\mid C_{l}+C_{l+2}$, $t\mid C_{l+2}+C_{l+4}$,  $t\nmid C_{i}+C_{i+2}$, for $i\neq l,l+2$, 
then $\MM$ and $\MM'$ are isomorphic. 
\end{theorem}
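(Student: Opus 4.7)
The plan is to adapt the strategy of Theorems \ref{M123} and \ref{prop:two-sums-not-matching}: construct an explicit isomorphism $\varphi=(\varphi_i)$ by producing
\[\varphi_{l-1}=\begin{pmatrix}\alpha&\beta\\ \gamma&\delta\end{pmatrix}\]
satisfying the divisibility conditions (\ref{izo}) with $t\mid\gamma$, $t\nmid\alpha$, $t\nmid\delta$. After cyclically relabeling so that numbering starts at vertex $l-1$, the remaining $\varphi_i$ are uniquely determined by $\varphi_ix_i=x_i\varphi_{i-1}$, and all are invertible since their common determinant $\alpha\delta-\beta\gamma$ will be a unit in $\CC[|t|]$.

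The first and crucial step is to show that, under the hypotheses $t\mid B_l+B_{l+2}$ and $t\mid B_{l+2}+B_{l+4}$ (and the analogous statements for the $C_i$'s), three of the four conditions in (\ref{izo}) become redundant. Writing $S_j$ and $T_j$ for the $j$-th partial sums of $B_l,B_{l+2},\ldots$ and of $C_l,C_{l+2},\ldots$ respectively, we have $S_2\equiv T_2\equiv 0$ and $S_3\equiv B_l$, $T_3\equiv C_l$ modulo $t$. Condition 2 is then automatic because each of its three terms is individually divisible by $t$. For condition 3, writing $S_3=B_l+tw$ and $T_3=C_l+tz$ and expanding shows that every correction over condition 1 is a $\CC[|t|]$-multiple of $t$ or of $\gamma$; using $t\mid\gamma$, this reduces condition 3 to condition 1. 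Only the following two conditions survive:
\begin{align*}
t&\mid -\alpha B_l+\delta C_l-B_lC_lt^{-1}\gamma,\\
t&\mid -\alpha S_4+\delta T_4-S_4T_4t^{-1}\gamma.
\end{align*}

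To solve these, set $\beta=0$, $\delta=1$, and promote both to equalities. The first yields $\gamma=t(B_l^{-1}-\alpha C_l^{-1})$; substituting into the second and rearranging produces
\[\alpha=\frac{T_4C_l(S_4-B_l)}{B_lS_4(T_4-C_l)}.\]
For $\varphi_0$ to be invertible it suffices that $\alpha$ is a unit, which amounts to $t\nmid S_4-B_l$ and $t\nmid T_4-C_l$. But $S_4-B_l=B_{l+2}+B_{l+4}+B_{l+6}\equiv B_{l+6}\pmod t$, which is a unit since none of the $B_i$ is divisible by $t$ in this subsection, and likewise $T_4-C_l\equiv C_{l+6}$. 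Hence $\varphi_0$, and therefore the entire $\varphi=(\varphi_i)$, is well-defined and yields an isomorphism $\MM\cong\MM'$.

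The main subtlety is precisely this collapse from four conditions to two, which is where the uniqueness of $l$ enters: the assumption $t\nmid B_{l+4}+B_{l+6}$ ensures the second surviving condition is not already implied by the first, while the two extra divisibilities $t\mid B_l+B_{l+2}$ and $t\mid B_{l+2}+B_{l+4}$ are what kill the would-be independent conditions 2 and 3. The absence of a third independent constraint prevents an obstruction of the type $B_{i_1}C_{i_2}B_{i_3}C_{i_4}-C_{i_1}B_{i_2}C_{i_3}B_{i_4}$ from Theorem \ref{inf1} from appearing, so the isomorphism always exists.
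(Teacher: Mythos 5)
Your proof is correct and follows essentially the same route as the paper: the two hypotheses $t\mid B_l+B_{l+2}$ and $t\mid B_{l+2}+B_{l+4}$ collapse the four conditions of (\ref{izo}) to two, which are then solved explicitly as in Theorem \ref{M123} by setting $\beta=0$, $\delta=1$ and reading off $\alpha$ and $\gamma$. The only (harmless) difference is your choice of base vertex $l-1$, so your two surviving conditions involve $B_l$ and $S_4=-B_{l+8}$ rather than the paper's $B_1$ and $B_1+B_3$; note in passing that $S_4$ and $T_4$ are units because they equal $-B_{l+8}$ and $-C_{l+8}$, which your invertibility check tacitly uses.
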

\begin{proof}
Without loss of generality we can assume that $l=3$. Keeping the same notation as before when constructing isomorphisms, because $t\mid B_{3}+B_{5}$, $t\mid B_{5}+B_{7}$,  the divisibility conditions (\ref{izo}) reduce to: 
\begin{align*}
t&\mid -\alpha B_{1}+ \delta C_{1} - B_{1}C_{1}t^{-1}\gamma,\\
t&\mid -\alpha (B_{1}+B_{3})+\delta (C_{1}+C_{3})  -(B_{1}+B_{3})(C_{1}+C_{3})t^{-1}\gamma.
\end{align*}
Now, we proceed as in the proof of Theorem \ref{M123} in order to construct an isomorphism between $\MM$ and $\MM'$. 
\end{proof}
There are five non-isomorphic indecomposable modules arising in this subcase, one for each index $l\in \{1,3,5,7,9\}$.
\vspace{5mm}

Assume that there is a unique odd index $i$ such that $t\mid B_i+B_{i+2}$ and $t\nmid B_{j}+B_{j+2}$, for $j\neq i$. 
\begin{theorem}
If $l$ is odd such that $t\mid B_{l}+B_{l+2}$, $t\nmid B_{i}+B_{i+2}$,   for $i\neq l$, and $t\mid C_{l}+C_{l+2}$, $t\nmid C_{i}+C_{i+2}$,   for $i\neq l$, 
then $\MM$ and $\MM'$ are isomorphic if and only if 
$$
t\mid (B_{l-2}+B_{l} )C_{l} B_{l+4} C_{l+6} - (C_{l-2}+C_l)B_{l} C_{l+4}B_{l+6}.
$$
\end{theorem}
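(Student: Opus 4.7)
My approach would mirror the proofs of Theorems \ref{M123} and \ref{inf1}: translate the requirement that $\varphi=(\varphi_i)$ be an isomorphism into divisibility conditions on the entries of $\varphi_0=\bigl(\begin{smallmatrix}\alpha & \beta\\ \gamma & \delta\end{smallmatrix}\bigr)$ via (\ref{izo}), set $\beta=0$ since $\beta$ is unconstrained, and write $k:=t^{-1}\gamma\in\mathbb C[[t]]$. By cyclically relabeling the vertices of $\Gamma_{10}$, we may assume $l=1$, so that $t\mid B_1+B_3$ and $t\mid C_1+C_3$ while all other adjacent sums $B_i+B_{i+2}$ and $C_i+C_{i+2}$ are units modulo $t$ (with $B_{-1}=B_9$ and $C_{-1}=C_9$ under cyclic indexing).

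With this labeling, the second line of (\ref{izo}) becomes trivial. Using $\sum_{i=1}^{10}b_i=0=\sum_{i=1}^{10}c_i$ together with $B_1+B_3\equiv 0\pmod t$, the partial sums $\sum_{i=1}^{6}b_i$ and $\sum_{i=1}^{8}b_i$ reduce modulo $t$ to $B_5$ and $B_5+B_7$ respectively (with analogous reductions for the $c_i$). Thus the four congruences of (\ref{izo}) collapse to the three-equation linear system
\begin{align*}
t&\mid -\alpha B_1 + \delta C_1 - B_1 C_1 k,\\
t&\mid -\alpha B_5 + \delta C_5 - B_5 C_5 k,\\
t&\mid -\alpha(B_5+B_7) + \delta(C_5+C_7) - (B_5+B_7)(C_5+C_7)\,k,
\end{align*}
and an isomorphism exists if and only if this system admits a solution $(\alpha,\delta,k)$ with both $\alpha$ and $\delta$ units.

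For necessity, viewing the system as a $3\times 3$ homogeneous system modulo $t$, the existence of a nontrivial solution requires the coefficient matrix to be singular. A cofactor expansion yields the determinant
$$
B_1 B_7 C_5\,(C_5+C_7-C_1)+B_5 C_1 C_7\,(B_1-B_5-B_7),
$$
and the identities $C_5+C_7-C_1\equiv -(C_9+C_1)$ and $B_1-B_5-B_7\equiv B_9+B_1\pmod t$ (both consequences of $\sum B_i=\sum C_i=0$ combined with $B_1+B_3\equiv 0\equiv C_1+C_3$) convert this modulo $t$ into $(B_9+B_1)C_1 B_5 C_7-(C_9+C_1)B_1 C_5 B_7$, i.e., the obstruction in the statement for $l=1$. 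One still has to check that the kernel of the coefficient matrix is not confined to the hyperplane $\alpha=0$ or $\delta=0$; this follows because $B_1,B_5,C_1,C_5$ are units, which prevents the first two equations from being satisfied when exactly one of $\alpha,\delta$ vanishes.

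For sufficiency, when $t$ divides the displayed expression, I would exhibit an isomorphism by setting $\alpha=1$ and solving the first two congruences for $k$ and $\delta$ (in that order); the vanishing of the determinant then forces the third congruence automatically. The remaining $\varphi_i$ are obtained via $\varphi_ix_i=x_i\varphi_{i-1}$ exactly as in the proof of Theorem~\ref{M123}, and are invertible since $\det\varphi_i=\alpha\delta$ is a unit. The main technical step is the determinant reduction together with its simplification using the cyclic sum identities; a minor caveat is the possibility of accidental coincidences such as $B_1\equiv B_5\pmod t$, which may force a different pivot choice in the elimination but does not alter the resulting obstruction.
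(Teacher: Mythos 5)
Your proposal is correct and follows essentially the same route as the paper: after relabeling, the conditions (\ref{izo}) collapse to three congruences in $(\alpha,\delta,t^{-1}\gamma)$, and the stated obstruction is exactly the compatibility condition for that system to admit a solution with $\alpha$ and $\delta$ units. The paper packages the elimination as ``proceed as in the proof of Theorem~\ref{inf1}'' rather than as a $3\times 3$ determinant, but the computation is the same, and your determinant does reduce, via $B_5+B_7\equiv -B_9$ and $C_5+C_7\equiv -C_9 \pmod t$, to the displayed expression for $l=1$.
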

\begin{proof}
Without loss of generality, assume that $l=3$. 
As before, if there were an isomorphism between $\MM$ and $\MM'$, its coefficients would have to satisfy the 
following conditions that we obtain from (\ref{izo}): 
\begin{align*}
t&\mid -\alpha B_{1}+ \delta C_{1} - B_{1}C_{1}t^{-1}\gamma,\\
t&\mid -\alpha (B_{1}+B_{3})+\delta (C_{1}+C_{3})  -(B_{1}+B_{3})(C_{1}+C_{3})t^{-1}\gamma ,\\
t&\mid \alpha B_{9}-\delta C_{9} -B_{9}C_{9}t^{-1}\gamma.
\end{align*}
Now we proceed as in the proof of Theorem \ref{inf1}. 
\end{proof}
Let us parameterize the indecomposable modules from the previous theorem. Let $\beta \in \mathbb C$ and denote by $\MM_{\beta}$ the indecomposable module whose coefficients $b_i$ satisfy $B_1=\beta$, $B_3=1$, $B_5=-1$, $B_7=-\beta-1$, and $B_9=1$. Also, $\beta\neq 0,-1,-2.$  

\begin{prop} Let $\MM'$ be a module such that $t\mid C_{l}+C_{l+2}$, $t\nmid C_{i}+C_{i+2}$,   for $i\neq l$. There exists $\beta\in \mathbb C\setminus \{0,-1,-2\}$ such that $\MM'\cong \MM_{\beta}.$ 
\end{prop}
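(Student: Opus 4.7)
The plan is to produce $\beta$ explicitly by solving the isomorphism criterion of the preceding theorem. By the cyclic symmetry used in that proof, it suffices to treat the case $l=3$, so from now on assume $t \mid C_3+C_5$ and $t \nmid C_i+C_{i+2}$ for $i \in \{1,5,7,9\}$. For $\MM_\beta$ one has $B_1=\beta$, $B_3=1$, $B_5=-1$, $B_7=-\beta-1$, $B_9=1$, and the restrictions $\beta \neq 0,-1,-2$ are exactly what guarantee that $\MM_\beta$ satisfies the same divisibility conditions as $\MM'$.

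Invoking the preceding theorem with $\MM=\MM_\beta$ and substituting the values of $B_i$, the criterion $t \mid (B_1+B_3)C_3 B_7 C_9 - (C_1+C_3) B_3 C_7 B_9$ collapses to
\[
t \mid -(\beta+1)^2\, C_3 C_9 - (C_1+C_3)\, C_7.
\]
Since $C_3$ and $C_9$ are units in $\CC[[t]]$, this is equivalent to
\[
(\beta+1)^2 \equiv -\frac{(C_1+C_3)\, C_7}{C_3\, C_9} \pmod{t}.
\]
The right-hand side reduces modulo $t$ to a specific nonzero complex number, since the four factors $C_1+C_3$, $C_3$, $C_7$, $C_9$ are all units in $\CC[[t]]$ by assumption. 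Pick any square root $s$ of this constant and set $\beta := -1 + s$.

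It remains to check that this $\beta$ avoids $\{0,-1,-2\}$. Write $\overline{C_i}$ for the reduction of $C_i$ modulo $t$. Since $s \neq 0$ we have $\beta \neq -1$. If instead $\beta \in \{0,-2\}$, then $s^2=1$, i.e.\ $\overline{C_3}\,\overline{C_9} + (\overline{C_1}+\overline{C_3})\,\overline{C_7} = 0$. From $\sum_i c_i = 0$ and $\overline{C_3}+\overline{C_5}=0$ we get $\overline{C_1}+\overline{C_7}+\overline{C_9}=0$, hence $\overline{C_9}=-\overline{C_1}-\overline{C_7}$; substituting this simplifies the relation to $\overline{C_1}\,(\overline{C_3}-\overline{C_7})=0$. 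Since $\overline{C_1}\neq 0$, this forces $\overline{C_7}=\overline{C_3}=-\overline{C_5}$, so $\overline{C_5}+\overline{C_7}=0$, contradicting $t \nmid C_5+C_7$. Hence $\beta \in \CC\setminus\{0,-1,-2\}$, and the preceding theorem yields $\MM' \cong \MM_\beta$.

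The main obstacle is this last verification: a priori the square-root construction could land on one of the excluded values, and ruling this out requires combining all the global constraints (namely $\sum_i c_i = 0$, $t\mid C_3+C_5$, and crucially $t\nmid C_5+C_7$) simultaneously. The rest of the argument is a direct application of the isomorphism criterion, in the same spirit as the proofs of Theorems~\ref{M123} and~\ref{inf1}.
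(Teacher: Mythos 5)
Your proof is correct and takes essentially the same route as the paper: you substitute the coefficients of $\MM_\beta$ into the criterion of the preceding theorem, solve $(\beta+1)^2\equiv -(\overline{C_1}+\overline{C_3})\,\overline{C_7}\,(\overline{C_3}\,\overline{C_9})^{-1} \pmod t$ for $\beta$, and exclude $\beta\in\{0,-1,-2\}$ by the same chain of identities ending in a contradiction with $t\nmid C_5+C_7$. Your form of the reduced criterion, $t\mid(\beta+1)^2C_3C_9+(C_1+C_3)C_7$, is in fact the correct one (the paper's displayed condition carries a stray factor of $\beta$ that its subsequent equation silently drops).
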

\begin{proof} Assume again that $l=3$. By the previous theorem, if $\MM$ and $\MM_{\beta}$ were isomorphic, then $t\mid (C_1+C_3)C_7\beta+C_3C_9 (\beta+1)^2$. If $\gamma_i$ is the constant term of $C_i$, then we set $\beta$ to be a solution of the equation $$(\beta+1)^2=-\gamma_3^{-1}\gamma_9^{-1}(\gamma_1+\gamma_3)\gamma_7.$$   
Since the right-hand side of the previous equation is invertible, $\beta\neq -1.$  If $\beta=0$ or $\beta=-2$, then $-\gamma_3\gamma_9=(\gamma_1+\gamma_3)\gamma_7,$ and subsequently,   $-\gamma_3\gamma_9-\gamma_3\gamma_7=\gamma_1\gamma_7.$ From $\gamma_1+\gamma_7+\gamma_9=0$ (this follows from $t\mid C_1+C_7+C_9$), we get $\gamma_3\gamma_1=\gamma_1\gamma_7$. Thus, $\gamma_3=\gamma_7$. This means that $\gamma_7+\gamma_5=\gamma_3+\gamma_5=0$, which is not possible since $t\nmid C_5+C_7$.  Hence, $\beta\neq 0, -1,-2$.  
\end{proof}

It is clear that $\MM_{\beta}\cong \MM_{\gamma}$ if and only if $(1+\beta)^2=(1+\gamma)^2$. This means that either $\beta=\gamma$ or $\beta +\gamma =-2.$ This means that the non-isomorphic indecomposable modules given in this subcase are parameterized by the set $\mathbb C\setminus \{0,-1,-2\}$, where we identify two points if they sum up to $-2$.

There are five different families (each in bijection with $\mathbb C$) of indecomposable modules arising in this subcase, one for each $l\in \{1,3,5,7,9\}.$
\vspace{5mm}

Assume that  $t\nmid B_{i}+B_{i+2}$, for all odd $i$. 
\begin{theorem}  \label{worst}
If $t\nmid B_{i}+B_{i+2}$ and $t\nmid C_{i}+C_{i+2}$, for all odd $i$, then the modules $\MM$ 
and $\MM'$ are isomorphic if and only if the following conditions hold: 
\begin{align*}
t&\mid C_1B_3(C_5+C_7)B_9-B_1C_3(B_5+B_7)C_9,\\
t&\mid   C_1B_3C_5(B_7+B_9)-B_1C_3B_5(C_7+C_9).
\end{align*}
\end{theorem}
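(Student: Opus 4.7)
The plan is to mirror the approach of Theorems \ref{M123} and \ref{inf1}: an isomorphism $\varphi = (\varphi_i)$ between $\MM$ and $\MM'$ is parametrized by the components of $\varphi_0 = \begin{pmatrix} \alpha & \beta \\ \gamma & \delta \end{pmatrix}$, subject to $t\nmid \alpha$, $t\nmid \delta$, $t\mid \gamma$, and the four divisibility conditions (\ref{izo}). In the present worst subcase none of the sums $B_i+B_{i+2}$ is divisible by $t$, so none of the four conditions in (\ref{izo}) is automatically satisfied, and all four must be handled non-trivially. This is exactly what makes this case produce two compatibility obstructions rather than one.

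First I would set $\beta=0$ (as in the previous proofs, nothing constrains $\beta$). Using the first condition
\[
t \mid -\alpha B_1 + \delta C_1 - B_1 C_1 t^{-1}\gamma,
\]
together with $t\nmid B_1$ and $t\nmid C_1$, I solve for $t^{-1}\gamma$ modulo $t$ as a linear expression in $\alpha$ and $\delta$. Substituting this into the other three conditions of (\ref{izo}) converts them into three homogeneous linear equations of the form $X_j\alpha - Y_j\delta \equiv 0 \pmod t$, for $j=2,3,4$, where $X_j, Y_j$ are explicit polynomials in the $B_i$ and $C_i$.

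Since a genuine isomorphism requires both $\alpha$ and $\delta$ to be units in $\CC[|t|]$, these three equations must have a common invertible solution, which forces the pairwise compatibility relations $X_i Y_j \equiv X_j Y_i \pmod t$. Of the three such pairwise relations only two are independent (the third follows algebraically). The crux of the proof will be a direct algebraic simplification, repeatedly using $B_1+B_3+B_5+B_7+B_9=0$ and $C_1+C_3+C_5+C_7+C_9=0$ (so that, for instance, $B_1+B_3+B_5+B_7=-B_9$), to show that the two independent compatibility conditions coincide with the two displayed divisibility conditions in the theorem.

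For the converse direction, assuming both stated divisibility conditions hold, I would construct an explicit isomorphism by setting $\beta=0$, $\delta=1$, reading off $\alpha$ from (say) the $j=2$ compatibility relation (the inversions needed exist because $t\nmid B_1, C_1$ and the relevant bracketed quantities are units), and then defining $\gamma$ from the first condition of (\ref{izo}); the matrices $\varphi_i$ for $i\ge 1$ are then determined by $\varphi_i x_i = x_i \varphi_{i-1}$ and are automatically invertible since their determinant equals $\alpha\delta - \beta\gamma = \alpha\delta$, a unit in $\CC[|t|]$. The main obstacle I foresee is the explicit reduction in the previous paragraph — matching the somewhat asymmetric-looking expressions $C_1 B_3(C_5+C_7)B_9 - B_1 C_3 (B_5+B_7)C_9$ and $C_1 B_3 C_5 (B_7+B_9) - B_1 C_3 B_5 (C_7+C_9)$ to the naturally arising $3{\times}3$ minors of the $4{\times}3$ coefficient matrix of (\ref{izo}). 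This is pure (but careful) bookkeeping; once performed, both implications are immediate.
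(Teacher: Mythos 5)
Your proposal is correct and follows essentially the same route as the paper: both reduce (\ref{izo}) to four non-trivial linear conditions on $\alpha,\delta,t^{-1}\gamma$, eliminate $t^{-1}\gamma$ (and one of $\alpha,\delta$) exactly as in the proof of Theorem \ref{inf1}, and observe that the resulting pairwise compatibility conditions yield precisely two independent obstructions, which are the two displayed divisibilities; the converse is likewise handled by normalizing one unit entry, solving for the others, and letting $\varphi_i x_i = x_i\varphi_{i-1}$ propagate the isomorphism. The only cosmetic difference is that you normalize $\delta=1$ where the paper sets $\alpha=1$.
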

\begin{proof}
As before, if there were an isomorphism $\varphi=(\varphi_i)$ between $\MM$ and $\MM'$, the coefficients of $\varphi_0=\begin{pmatrix}\alpha & \beta \\ \gamma & \delta  \end{pmatrix}
$ would have to satisfy the 
following conditions that we obtain from (\ref{izo}): 
\begin{align*}\label{izo} 
t&\mid -\alpha B_{1}+ \delta C_{1} - B_{1}C_{1}t^{-1}\gamma,\\
t&\mid -\alpha (B_{1}+B_{3})+\delta (C_{1}+C_{3})  -(B_{1}+B_{3})(C_{1}+C_{3})t^{-1}\gamma ,\\
t&\mid \alpha (B_{7}+B_{9})-\delta (C_{7}+C_{9})  -(B_{7}+B_{9})(C_{7}+C_{9})t^{-1}\gamma ,\\
t&\mid \alpha B_{9}-\delta C_{9} -B_{9}C_{9}t^{-1}\gamma. 
\end{align*}
Now, we use the same calculations as in the proof of Theorem \ref{inf1} in order to obtain the desired divisibility conditions.    The trick is to use any three of the above divisibility conditions and treat them as in the proof of Theorem \ref{inf1}. For example, we use the first two and the last condition, and treat $B_5+B_7$ as $B_5$ in the proof of Theorem \ref{inf1}. This gives us that $t\mid C_1B_3(C_5+C_7)B_9-B_1C_3(B_5+B_7)C_9$. Analogously, use the first three conditions and treat $B_7+B_9$ as $B_7$ in the proof of Theorem \ref{inf1} to obtain $t\mid   C_1B_3C_5(B_7+B_9)-B_1C_3B_5(C_7+C_9).$

Conversely, if the given conditions hold, by setting $\alpha=1$, one easily computes $\delta$ and $\gamma$ from the above relations: $\delta=B_1C_3B_3^{-1}C_1^{-1}(B_1+B_3)(C_1+C_3)^{-1}$, $\gamma=t (-C_1^{-1}+\delta B_1^{-1})$. We set $\beta=0.$
\end{proof}

We are left to parameterize the indecomposable modules from the previous theorem.

Denote by $\MM$ the indecomposable module corresponding to the coefficients $B_i=b_{i}+b_{i+1}$, for odd $i$. Since $\sum_i B_i=0$, we can rescale so that one of the $B_i$'s is equal to 1, say $B_7$, because from the previous theorem it holds that the module $\MM$ is isomorphic to the module corresponding to the coefficients $B_i^{'}=B_iB^{-1}_{7}$, for $i\neq 7$, and $B_7^{'}=1$. 

Let $\MM'$ denote another indecomposable module determined by the coefficients $C_1=\alpha$, $C_3=\beta$, $C_5=\gamma$, $C_7=1$, and $C_9=\delta$, all of them being complex numbers such that   $\alpha+\beta+\gamma+1+\delta=0$. Also, $\alpha,\beta,\gamma, \delta \neq 0$, $\gamma,\delta\neq -1$, $\alpha+\beta\neq 0$, $\alpha+\delta\neq 0$, $\gamma+\beta\neq 0$. Under the assumption that $\MM$ and $\MM'$ are isomorphic, we will express $\alpha,\beta,$ and $\delta$ as a function of $\gamma$ and coefficients $B_i$. This will help us to find an appropriate parameterization of indecomposable modules in this subcase. 

By the previous theorem, it must hold 
\begin{align*}
t&\mid \alpha (1+\gamma)B_3B_9-\beta\delta B_1(B_5+1),\\
t&\mid   \alpha\gamma B_3(1+B_9)-\beta(1+\delta)B_1B_5.
\end{align*}
These two relations imply that 
\begin{align*}
t&\mid (1+\delta)(1+\gamma)B_5B_9-\gamma\delta (1+B_9)(B_5+1),\\
t&\mid   B_1^{-1}B_5^{-1}[\alpha\gamma B_3(1+B_9)-\alpha(1+\gamma)B_3B_5B_9(1+B_5)^{-1}]-\beta.
\end{align*}
The first of the last two relations is equivalent to 
\begin{align*}
t&\mid B_5B_9(\alpha+\beta)-\gamma\delta (B_1+B_3).
\end{align*}
Since, $\alpha+\beta=-1-\gamma-\delta$, this yields
\begin{align*}
t&\mid -(1+\gamma)[1+\gamma B_5^{-1}B_9^{-1}(B_1+B_3)]^{-1}-\delta.
\end{align*}
The last divisibility condition is under the assumption that $1+\gamma B_5^{-1}B_9^{-1}(B_1+B_3)$ is invertible, i.e., that $t\nmid 1+ \gamma B_5B_9(B_1+B_3)^{-1}$. If this condition holds, then $\delta\neq 0$. If  $B_5^{-1}B_9^{-1}(B_1+B_3)=1$, then from $B_1+B_3+B_5+1+B_9=0$ we get $(B_5+1)(B_9+1)=0$, which is not possible. Thus, $1+\gamma B_5^{-1}B_9^{-1}(B_1+B_3)\neq 1+\gamma$ and $\delta\neq -1$.  Also, from $t\mid B_5B_9(\alpha+\beta)-\gamma\delta (B_1+B_3)$ follows that $\alpha+\beta\neq 0$ because $\gamma\delta (B_1+B_3)$ is invertible.

 From $t\mid B_5B_9(\alpha+\beta)-\gamma\delta (B_1+B_3)$  and $t\mid   B_1^{-1}B_5^{-1}[\alpha\gamma B_3(1+B_9)-\alpha(1+\gamma)B_3B_5B_9(1+B_5)^{-1}]-\beta,$ we get 
 $$t\mid \alpha [(1-B_1^{-1}(1+B_5)^{-1}B_3B_9)-\gamma B_1^{-1}(1+B_5)^{-1}B_3B_5^{-1}(B_1+B_3))]-\delta\gamma B_5^{-1}B_9^{-1}(B_1+B_3).$$
If $t\nmid B_5B_3^{-1}(B_1+B_3)^{-1}(B_1(1+B_5)-B_3B_9)-\gamma$, then  $(1-B_1^{-1}(1+B_5)^{-1}B_3B_9)-\gamma B_1^{-1}(1+B_5)^{-1}B_3B_5^{-1}(B_1+B_3))$ is invertible, and so $\alpha\neq 0$.
If $t\mid \alpha +\delta$, then from $t\mid \alpha [(1-B_1^{-1}(1+B_5)^{-1}B_3B_9)-\gamma B_1^{-1}(1+B_5)^{-1}B_3B_5^{-1}(B_1+B_3))]-\delta\gamma B_5^{-1}B_9^{-1}(B_1+B_3)$ direct computation yields that $t\mid 1+ \gamma B_5B_9(B_1+B_3)^{-1}$ which we already assumed is not true. Hence, $t\nmid \alpha+\delta$ and $\alpha+\delta\neq 0.$ It is shown in a similar fashion that $\beta+\gamma\neq 0$. 

Therefore, if we define $\delta$ to be the constant term of $-(1+\gamma)[1+\gamma B_5^{-1}B_9^{-1}(B_1+B_3)]^{-1}$, $\alpha$ to be the constant term of  $-\delta\gamma [(1-B_1^{-1}(1+B_5)^{-1}B_3B_9)-\gamma B_1^{-1}(1+B_5)^{-1}B_3B_5^{-1}(B_1+B_3))]^{-1}B_5^{-1}B_9^{-1}(B_1+B_3)$, and $\beta$ to be the constant term of $-\alpha B_1^{-1}(1+B_5)^{-1}B_3B_9[1+\gamma B_5^{-1}B_9^{-1}(B_1+B_3)]$, we get a parameterization of the coefficients of $\MM$ with only one complex parameter $\gamma$ involved. The other parameters, $\alpha$, $\beta$, and $\delta$, are expressed as a function of $\gamma$ and the coefficients $B_i$. If we want to fix a value of one of the parameters $\alpha$, $\beta$, $\gamma$, and $\delta$, then the sum of the remaining three  is fixed. Thus, one of them is determined by the remaining two, so we end up with a parameterization of the form, e.g., $\alpha,-2-\alpha-\gamma, \gamma,1,1$, with two parameters. Two such modules corresponding to different $5$-tuples of parameters are isomorphic if and only if the divisibility conditions from Theorem~\ref{worst} are satisfied. Thus, we identify two $5$-tuples if and only if they satisfy the divisibility conditions from Theorem~\ref{worst}.         

 In this subcase there is only one family of indecomposable modules.

\bibliographystyle{abbrv}
\bibliography{biblio}

\end{document}